\newtheorem{theorem}{Theorem}
\numberwithin{theorem}{section}
\newtheorem{lemma}[theorem]{Lemma}
\newtheorem{proposition}[theorem]{Proposition}
\newtheorem{definition}[theorem]{Definition}
\newtheorem{assumption}[theorem]{Assumption}
\theoremstyle{remark}
\newtheorem{remark}[theorem]{Remark}
\newcommand{\R}{\mathbb{R}}
\newcommand{\grad}{\nabla \!\!\;}
\newcommand{\hess}{\nabla^2 \!\!\;}
\newcommand{\K}{k}
\newcommand{\primalL}{L}
\newcommand{\primalmu}{\mu}
\newcommand{\dualL}{L^*}
\newcommand{\dualmu}{\mu^*}
\newcommand{\xmin}{x_{\min}}
\newcommand{\dualvarx}{x^*}
\newcommand{\dualvary}{y^*}
\newcommand{\inner}[2]{\ensuremath{\left\langle#1,#2\right\rangle }}
\newcommand{\norm}[1]{\ensuremath{\left\lVert#1\right\rVert }}
\newcommand{\etal}{\emph{et al.}}
\renewcommand{\l}{\left}
\renewcommand{\r}{\right}
\newcommand{\ie}{i.e.,}
\newcommand{\eg}{e.g.,}
\DeclareMathOperator{\interior}{int}
\DeclareMathOperator{\dom}{dom}
\DeclareMathOperator{\rank}{rank}
\newcommand{\abs}[1]{\left\lvert#1\right\rvert}
\title{Dual Space Preconditioning for Gradient Descent}
\author[1,4,*]{Chris J. Maddison}
\author[2,*]{Daniel Paulin}
\author[3]{Yee Whye Teh}
\author[3]{Arnaud Doucet}
\affil[1]{University of Toronto, Toronto, Canada}
\affil[2]{University of Edinburgh, Edinburgh, UK}
\affil[3]{University of Oxford, Oxford, UK}
\affil[4]{DeepMind, London, UK}
\affil[*]{Both authors contributed equally to this work.}
\date{\today}
\begin{document}

\maketitle

\begin{abstract}
The conditions of relative smoothness and relative strong convexity were recently introduced for the analysis of Bregman gradient methods for convex optimization. We introduce a generalized left-preconditioning method for gradient descent, and show that its convergence on an essentially smooth convex objective function can be guaranteed via an application of relative smoothness in the dual space. Our relative smoothness assumption is between the designed preconditioner and the convex conjugate of the objective, and it generalizes the typical Lipschitz gradient assumption. Under dual relative strong convexity, we obtain linear convergence with a generalized condition number that is invariant under horizontal translations, distinguishing it from Bregman gradient methods. Thus, in principle our method is capable of improving the conditioning of gradient descent on problems with non-Lipschitz gradient or non-strongly convex structure. We demonstrate our method on $p$-norm regression and exponential penalty function minimization.
\end{abstract}
 \section{Introduction}

\subsection{Setting and method}
\label{sec:introduction}

We study the minimization of a proper, closed, and essentially smooth convex function $f: \R^d \to \R \cup \{\infty\}$,
\begin{equation}\tag{P}
\label{eq:problem} \min_{x \in \R^d} f(x).
\end{equation}
For unconstrained $f$, \ie{} $\dom f = \{x \in \R^d : f(x) < \infty\} = \R^d$, essential smoothness is simply differentiability. For constrained $f$, essential smoothness is the assumption that $f$ is differentiable on $\interior(\dom f) \neq \emptyset$ and that the norm of the gradient grows without bound, $\norm{\grad f(x)} \to \infty$, as $x$ approaches the boundary of the domain. Thus, a global minimizer $\xmin$ of $f$, if it exists, is in $\interior(\dom f)$. The method that we introduce (Algorithm \ref{alg:dualspaceprecon}) is a non-linear generalization of linear left-preconditioning for gradient descent (see, \eg{} \cite[sect. 9.4]{boyd2004convex}), and our analysis relies on recent generalizations of the typical Lipschitz gradient assumption \cite{bauschke2016descent}. For the sake of exposition, we will assume in the introduction that $f$ is twice continuously differentiable on $\interior(\dom f)$, but this is not a requirement of our method.

In the analysis of first-order methods, it is standard to assume that the derivatives of $f$ at some order are globally bounded by constants. For example, consider the gradient descent method, whose iterates satisfy
\begin{equation}
\label{eq:graddesc} x_{i+1} = \underset{x \in \dom f}{\arg \min} \left\{\inner{\grad f(x_i)}{x} + \tfrac{L}{2} \norm{x-x_i}^2\right\},
\end{equation}
where $L>0$ and $x_0 \in \interior(\dom f)$. A classical analysis shows that the iterates of gradient descent converge linearly in $i$, \ie{} $f(x_i) - f(\xmin) = \mathcal{O}(\lambda^{i})$ for $\lambda = 1 - \mu / L$, when $f$ is assumed to be $\mu > 0$ strongly convex and $\grad f$ is assumed to be $L$-Lipschitz continuous (typically called ``smoothness'').  Taken together for twice continuously differentiable $f$, these conditions are equivalent to the conditions that the eigenvalues of the Hessian matrix of second-order partial derivatives $\hess f(x)$ are everywhere lower bounded by $\mu > 0$ (strong convexity) and upper bounded by $L > 0$ (smoothness),
\begin{equation}
\label{eq:strgcvxsmoothtwicediff} \mu I \preceq \hess f(x) \preceq L I \text{ for all } x \in \interior(\dom f).
\end{equation}

\begin{algorithm}[t]
\caption{Dual preconditioned gradient descent.}
\label{alg:dualspaceprecon}
Given an essentially smooth convex $f : \R^d \to \R \cup \{\infty\}$, a Legendre convex $\K : \R^d \to \R \cup \{\infty\}$ with $\grad f(\interior( \dom f)) \subseteq \interior(\dom k)$ and $0 = \arg \min_{\dualvarx} k(\dualvarx)$, $x_0 \in \interior(\dom f)$, and $\dualL > 0$. For all $i \geq 0$,
\begin{equation*}
x_{i+1} = x_{i} - \frac{1}{\dualL} \grad \K(\grad f(x_{i})).
\end{equation*}
\end{algorithm}

Analyses of first-order methods using only non-constant bounds on the derivatives of $f$ have recently been discovered \cite{birnbaum2010new, bauschke2016descent, van2017forward, teboulle2018simplified, lu2018relatively}. In particular, \cite{bauschke2016descent} studied the following generalized gradient method that takes a designed essentially smooth, strictly convex reference function $h : \R^d \to \R \cup \{\infty\}$ with $\interior( \dom f) \subseteq \interior (\dom h)$. Given $x_0 \in \interior(\dom f)$, this method's iterates satisfy
\begin{equation}
\label{eq:mirrordescent}
x_{i+1} = \underset{x \in \dom f}{\arg \min} \left\{\inner{\grad f(x_i)}{x} + \primalL D_h(x, x_i)\right\}
\end{equation}
where $\primalL > 0$, $\inner{\cdot}{\cdot}$ is the Euclidean inner product, and $D_h(x,y) = h(x)-h(y) - \inner{\grad h(y)}{x-y}$ for $x,y \in \interior(\dom h)$. \eqref{eq:mirrordescent} is due to \cite{nemirovski1979effective} and falls in a family of so-called Bregman gradient methods. A standard analysis of \eqref{eq:mirrordescent} (see, \eg{} \cite{beck2003mirror}) makes the ``absolute'' assumptions that $f$ is Lipschitz continuous and that $h$ is strongly convex. In contrast, consider the following ``relative'' conditions between $f$ and $h$, for $\primalmu \geq 0$ and $\primalL > 0$
\begin{equation}
\label{eq:relcvxtwicediff} \primalmu \hess h(x) \preceq \hess f(x) \preceq \primalL \hess h(x)  \text{ for all } x \in \interior(\dom f).
\end{equation}
For twice continuously differentiable $f$, Bauschke \etal{} \cite{bauschke2016descent} first showed that \eqref{eq:relcvxtwicediff}  with $\primalmu = 0$ is a sufficient assumption to guarantee the sublinear convergence of $f(x_i) - f(\xmin)$ in \eqref{eq:mirrordescent}.
Lu \etal{} \cite{lu2018relatively} extended this analysis, and showed that \eqref{eq:relcvxtwicediff} with $\primalmu > 0$ is sufficient for the linear convergence of $f(x_i) - f(\xmin)$.  Conditions \eqref{eq:relcvxtwicediff} are ``relative'' in the sense that it is possible for \eqref{eq:relcvxtwicediff} to hold for $f$ and $h$ that are both non-smooth or non-strongly convex. For example, \cite{bauschke2016descent} study a Poisson inverse objective whose derivatives of all orders are unbounded as $x \to 0$. They design an appropriate $h$, whose Hessian is also unbounded at $0$, but which satisfies \eqref{eq:relcvxtwicediff}. Analyses of first-order methods using non-constant bounds on the derivatives of $f$ have been extended to non-convex $f$ \cite{bolte2018first, 2019arXiv190110791D} continuous convex optimization \cite{lu2019relative}, composite least-squares problems \cite{flammarion2017stochastic}, symmetric non-negative matrix factorization \cite{2019arXiv190110791D}, and the Sinkhorn algorithm \cite{2019arXiv190906918M}. Notably, relative smoothness conditions have also been used to justify fast implementations of third-order tensor methods \cite{nesterov2019implementable}.

The method that we introduce (Algorithm \ref{alg:dualspaceprecon}) exploits an application of these relative conditions in the dual space through an essentially smooth, strictly convex dual reference function $\K : \R^d \to \R \cup \{\infty\}$ with $\grad f(\interior(\dom f)) \subseteq \interior(\dom k)$ and $0 = \arg \min_{\dualvarx} k(\dualvarx)$. The method is a generalization of left-preconditioned gradient descent, which we discuss in more detail in section \ref{sec:preconditioning}. In section \ref{sec:dualgradient} we consider the conditions under which we can provide convergence rates for our method. For twice continuously differentiable $f$ sufficient conditions that we study are the existence of $\dualmu \geq 0$, $\dualL > 0$ such that
 \begin{equation}
     \label{eq:ourdualityconditions} \dualmu [\hess k(\grad f(x))]^{-1}\preceq \hess f(x) \preceq \dualL [\hess k(\grad f(x))]^{-1}  \text{ for all } x \in \interior(\dom f).
 \end{equation}
 When $\dualmu = 0$, we show that $\K(\grad f(x_i)) - k(0)$ converges sub-linearly with rate $\mathcal{O}(i^{-1})$ (and thus $x_i \to \xmin$) along the iterates of Algorithm \ref{alg:dualspaceprecon}. When $f$ is strictly convex and $\dualmu > 0$, we show that $f(x_i) - f(\xmin)$ converges linearly with rate $\lambda^* = 1-\dualmu/\dualL$. As we show in section \ref{sec:dualgradient}, assumptions \eqref{eq:ourdualityconditions} are relative smoothness and strong convexity assumptions in the dual space, and they are distinct from \eqref{eq:relcvxtwicediff}. In section \ref{sec:applications}, we design dual reference functions for $p$-norm regression (see \cite{bubeck2018homotopy,adil2019iterative} and references therein) and exponential penalty functions (see, \eg{} \cite{cominetti1994asymptotic, cominetti1994stable}).

\subsection{Preconditioning}
\label{sec:preconditioning}
In this paper, we introduce a generalization of linear left-preconditioning, which is a fundamental technique used in algorithms for solving linear systems. In this subsection, we review linear preconditioning, following closely Wathen's short introduction \cite{wathen_2015}, and give an interpretation of our method and Bregman gradient methods as left- and right-preconditioning, respectively.

Consider the problem of minimizing a positive-definite quadratic, which is equivalent to finding the solution $x$ of a linear system of $d$ equations with $d$ unknowns:
$Ax = b$ where $b \in \R^d$ and $A \in \R^{d \times d}$ is symmetric and positive-definite. ``Preconditioning'' refers to the idea of modifying this system in a way that preserves the solution, but improves the convergence of iterative methods. For example, given a positive-definite $P \in \R^{d \times d}$, we may consider the following systems (known as left- or right-preconditioning, respectively):
\begin{equation}
    \label{eq:linprecon}P^{-1}Ax = P^{-1}b \qquad \text{or} \qquad AP^{-1}y = b \, \text{ s.t. } x = P^{-1}y
\end{equation}
These have the same solution as the original, and if $P^{-1}A$ or $AP^{-1}$ approximates the identity, then iterative methods will converge faster. Indeed, for iterates of the conjugate gradients method (CG) \cite{hestenes1952methods}, $\inner{x-x_i}{A(x-x_i)}$ converges linearly with a rate that varies monotonically with the condition number $\kappa^A = \lambda_{\max}^A / \lambda_{\min}^A$, \ie{} the ratio of the largest to the small eigenvalue of $A$ \cite[Chap. 3.1]{greenbaum1997iterative}.
Smaller condition number is better, so if $\kappa^A \gg \kappa^{P^{-1}A}$, then left-preconditioned CG will converge faster. Preconditioned methods typical solve a system with $P$ at every iteration. Thus, $P$ should satisfy two criteria: $ \kappa^{P^{-1}A}$ should be small and $Px = b$ should be easy to solve. It may seem difficult to strike this balance, but it is possible in many cases. Wathen \cite{wathen_2015} gives an example due to Strang for Toeplitz matrices that reduces the complexity of linear solves from $\mathcal{O}(d^2)$ to $\mathcal{O}(d \log d)$. More generally, preconditioners are considered essential in solvers for very large, sparse, linear systems \cite{benzi2002preconditioning, saad2003iterative}.

Consider now the more general problem \eqref{eq:problem} for an unconstrained $f$. One can show that the following are stationary conditions of Algorithm \ref{alg:dualspaceprecon} or \eqref{eq:mirrordescent}, respectively:
\begin{equation}
    \label{eq:nonlinprecon} \grad \K(\grad f(x)) = 0 \qquad \text{or} \qquad \grad f(\grad h^*(y)) = 0 \, \text{ s.t. } x = \grad h^*(y),
\end{equation}
where $\grad h^*(y) = \arg \max_{x \in \R^d} \inner{x}{y} - h(x)$. Clearly, \eqref{eq:linprecon} specializes \eqref{eq:nonlinprecon} for appropriately chosen quadratic $f, k, h$. Thus, our method and the Bregman gradient method \eqref{eq:mirrordescent} may be seen as a generalization of left- and right-preconditioning for gradient descent, respectively. Moreover, for symmetric, positive-definite $A,P \in \R^{d \times d}$, the existence of $L,\mu > 0$ such that $\mu P \preceq A \preceq L P$ guarantees $\kappa^{P^{-1}A} \leq L / \mu$ and an error bound on preconditioned CG. This is generalized by the primal \eqref{eq:relcvxtwicediff} and dual  \eqref{eq:ourdualityconditions} relative conditions. However, in contrast to the linear case, the choice of left (dual) vs. right (primal) in the non-linear case is much more consequential and the two methods are not equivalent in general (left- and right-preconditioning for CG are equivalent \cite[Chap. 9.1]{saad2003iterative}). The class of $f$ satisfying the dual conditions \eqref{eq:ourdualityconditions} for a fixed $k$ is closed under horizontal translations. This is not true in general for $f$ satisfying the primal conditions \eqref{eq:relcvxtwicediff} for a fixed $h$. Thus, in general, $\primalmu \neq \dualmu$, $\primalL \neq \dualL$, and the global information encoded in the dual reference function $\K$ is distinct from the information encoded in the reference function $h$.

Non-linear preconditioning is far less studied, but has been considered in a number of works. Non-linear preconditioning methods have recently been shown to stabilize Euler discretization schemes of stochastic differential equations  \cite{hutzenthaler2012strong, sabanis2013note}. In fact, the non-linear preconditioning of \cite{hutzenthaler2012strong} is the same as the one we consider for exponential penalty functions. Finally, recent work \cite{cai2002nonlinearly, dolean2016nonlinear} developed non-linear preconditioning schemes for Newton's method applied to problems arising from the discretization of partial differential equations.

 \section{Convex analysis background}
\label{sec:cvxanalysis}
\newcommand{\sphere}{\mathcal{S}}

\subsection{Essential smoothness and convex conjugates}

In this section we review some basic facts of convex analysis that will be used throughout. Let $h : \R^d \to \R \cup \{\infty\}$ be a proper closed convex function with domain $\dom h = \{x : \R^d : h(x) < \infty\}$. To indicate $\dom h = \R^d$, we simply define $h : \R^d \to \R$ as ranging only over the reals. $\partial h(x)$ denotes the subdifferential of $h$ at $x \in \R^d$. For a proper convex functions, being closed is equivalent to being lower semi-continuous (lsc). Let $\norm{\cdot}$ and $\inner{\cdot}{\cdot}$ indicate the Euclidean norm and inner product, respectively, unless otherwise specified. The convex conjugate $h^* : \R^d \to \R \cup \{\infty\}$ of a proper closed convex function $h$ is given by
\begin{equation}
\label{eq:convconj}
h^*(\dualvarx)=\sup\{ \inner{x}{\dualvarx}-h(x) : x \in \dom h\}.
\end{equation}
$h^*$ is also a proper closed convex function, and $(h^*)^* = h$ \cite[Cor. 12.2.1]{rockafellar1970convex}. For more on $h^*$, we refer readers to \cite{rockafellar1970convex, boyd2004convex, borwein2010convex}.

In this work, we study the minimization of an essentially smooth convex function $f$ \cite[Chap. 25]{rockafellar1970convex}, which can be thought of as an assumption of differentiability. For constrained $f$, essential smoothness comes with additional structure that prevents $f$ from having sharp edges at the boundary of its domain. In some cases, we will consider the additional assumption that $f$ is strictly convex on the interior of its domain.

\begin{definition}[Essential smoothness and Legendre convexity]
\label{def:legendre}
Let $h : \R^d \to \R \cup \{\infty\}$ be a proper closed convex function. $h$ is essentially smooth if,
    \begin{enumerate}
        \item  \label{def:legendre:domain} $\interior(\dom h)$ is not empty.
        \item  \label{def:legendre:diff} $h$ is differentiable on $\interior(\dom h)$, with $\lim_{i \to \infty} \norm{\grad h(x_i)} = \infty$ whenever $x_i \in \interior(\dom h)$ is a sequence converging to the boundary of $\interior(\dom h)$.
    \end{enumerate}
$h$ is Legendre convex, if additionally
    \begin{enumerate}[resume]
        \item  \label{def:legendre:strictlyconvex} $h$ is strictly convex on $\interior(\dom h)$
    \end{enumerate}
In this work, the assumption that $h$ is essentially smooth carries with it the implied assumption that $h$ is proper and closed.
\end{definition}

Essentially smooth convex functions can only be minimized in their interior.

\begin{lemma}
\label{lemma:minimaoflegendrefns}
If $h : \R^d \to \R \cup \{\infty\}$ is an essentially smooth convex function that is minimized at $\xmin \in \dom h$, then $\xmin \in \interior(\dom h)$.
\end{lemma}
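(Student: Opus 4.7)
The plan is to derive a contradiction from the assumption $\xmin \in \dom h \setminus \interior(\dom h)$. Fix any interior point $x_0 \in \interior(\dom h)$ --- one exists by the first clause of Definition~\ref{def:legendre} --- and form the sequence $x_n := (1 - 1/n)\xmin + (1/n) x_0$. Each $x_n$ sits in $\interior(\dom h)$ by the standard convexity-of-interior argument, so essential smoothness forces $\norm{\grad h(x_n)} \to \infty$, and compactness of the unit sphere in $\R^d$ lets me extract a subsequential limit $u^* = \lim_n \grad h(x_n) / \norm{\grad h(x_n)}$ with $\norm{u^*} = 1$.

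The next two steps will pin down $u^*$ using the subgradient inequality at the interior iterates $x_n$. First, for arbitrary $y \in \dom h$, writing $\inner{\grad h(x_n)}{y - x_n} \leq h(y) - h(x_n)$, noting that $h(x_n) \to h(\xmin)$ by squeezing the minimizer lower bound against the convex upper bound along the segment $[\xmin, x_0]$, dividing by $\norm{\grad h(x_n)}$, and sending $n \to \infty$ should give $\inner{u^*}{y - \xmin} \leq 0$ for every $y \in \dom h$. Second, the minimizer property at $\xmin$ combined with the subgradient inequality at $x_n$ evaluated at $\xmin$ gives $\inner{\grad h(x_n)}{x_n - \xmin} \geq 0$, which via $x_n - \xmin = (1/n)(x_0 - \xmin)$ and the same limiting procedure passes to $\inner{u^*}{x_0 - \xmin} \geq 0$; combined with the first step at $y = x_0$ this forces the equality $\inner{u^*}{x_0 - \xmin} = 0$.

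The contradiction comes from the interior property of $x_0$: pick $\epsilon > 0$ with $B(x_0, \epsilon) \subseteq \dom h$. Applied to $y = x_0 + z$ with $\norm{z} < \epsilon$, the first step gives $\inner{u^*}{z} = \inner{u^*}{y - \xmin} - \inner{u^*}{x_0 - \xmin} \leq 0$ across a full ball of directions, so taking $z = (\epsilon/2)\, u^*$ delivers $(\epsilon/2)\norm{u^*}^2 \leq 0$, contradicting $\norm{u^*} = 1$. The main obstacle I anticipate is that the one-direction bound $\inner{u^*}{x_0 - \xmin} = 0$ alone is not enough in dimension $d \geq 2$, since the gradient could a priori blow up purely in directions orthogonal to $x_0 - \xmin$; the argument hinges on leveraging a full-dimensional ball of test points $y \in B(x_0, \epsilon) \subseteq \dom h$ (exactly what the interior assumption on $x_0$ supplies) to constrain every direction of $u^*$ simultaneously.
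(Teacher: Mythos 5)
Your proof is correct, and it is a genuinely different argument from the paper's. The paper gets the result essentially for free by citing Rockafellar's Lemma~26.2, a one-dimensional statement that the directional derivative $h'\bigl(\xmin + \lambda(a - \xmin); a - \xmin\bigr)$ decreases to $-\infty$ as $\lambda \downarrow 0$ for any interior $a$, which is immediately incompatible with $\xmin$ being a minimizer. You instead work directly from the raw definition of essential smoothness ($\norm{\grad h(x_n)} \to \infty$) and run a compactness argument: extracting the normalized gradient limit $u^*$, establishing via the gradient inequality that $u^*$ lies in the normal cone of $\dom h$ at $\xmin$ (your Step~6) while the minimizer property simultaneously forces $\inner{u^*}{x_0 - \xmin} \geq 0$ (your Step~7), and then using the full-dimensional ball $B(x_0, \epsilon) \subseteq \dom h$ to squeeze $u^*$ to zero. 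Both halves of your pincer are genuinely needed and correctly executed: Step~6 alone only places $u^*$ in the normal cone at a boundary point, which is nonempty, so the argument would not close without Step~7; conversely, Step~7 alone places no constraint at all. The paper's route is shorter but leans on a sharper quoted fact; yours is longer but self-contained modulo the basic gradient inequality, the stability of $\interior(\dom h)$ under convex combinations with domain points (Rockafellar Thm.~6.1), and the elementary squeeze $h(x_n) \to h(\xmin)$. Yours is also a pleasingly dual-flavored argument --- a separating-hyperplane/normal-cone contradiction rather than a one-dimensional derivative blow-up --- which fits the spirit of this paper somewhat better than the cited primal lemma.
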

\begin{proof}
Suppose that $\xmin$ is a boundary point. Since $\interior(\dom h) \neq \emptyset$, by convexity there exists a line segment connecting the boundary point $\xmin$ and any other interior point $a$. However, by \cite[Lem. 26.2]{rockafellar1970convex}, we know that the directional derivative converges to $-\infty$ as we tend towards the boundary point on this line segment, hence $\xmin$ could not be a minimum of $h$.
\end{proof}

Legendre convex functions (essentially smooth, strictly convex functions) have even more convenient structure. One consequence of Legendre structure, which will be used in our analysis to show that $k$ is radially unbounded, is that achieving a minimum is sufficient to imply that a Legendre convex function grows without bound.

\begin{lemma}
\label{lemma:radiallyunbounded}
Let $h : \R^d \to \R \cup \{\infty\}$ be a Legendre convex function that is minimized at $0 \in \dom h$. Then $h$ is radially unbounded, \ie{} if $x_i \in \R^d$ is a sequence such that $\norm{x_i} \to \infty$, then $h(x_i) \to \infty$.
\end{lemma}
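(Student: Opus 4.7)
The plan is to establish a linear lower bound on $h$ along every ray from the minimizer $0$, which is far stronger than needed but immediately gives radial unboundedness. The argument combines two ingredients in a clean way: strict convexity is used to produce a strict gap between $h(0)$ and the value of $h$ on a small sphere around $0$, and then ordinary convexity propagates that gap linearly outward to all of $\R^d$.

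Step by step: First, since $h$ attains its minimum at $0 \in \dom h$, Lemma \ref{lemma:minimaoflegendrefns} places $0 \in \interior(\dom h)$. Because $\interior(\dom h)$ is open, I would fix $r > 0$ such that the closed ball $\bar B(0,r)$ is contained in $\interior(\dom h)$; on this ball $h$ is finite and continuous. Second, define $m := \min_{\|y\|=r} h(y)$, attained by compactness, and verify that $m > h(0)$: if some $y_0$ with $\|y_0\| = r$ satisfied $h(y_0) = h(0)$, then strict convexity on $\interior(\dom h)$ would give $h(t y_0) < t h(y_0) + (1-t) h(0) = h(0)$ for $t \in (0,1)$, contradicting minimality of $0$. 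Third, for any $x$ with $\|x\| > r$: if $x \notin \dom h$ then $h(x) = \infty$ and we are done, otherwise $y := (r/\|x\|) x$ is a convex combination of $x$ and $0$, so convexity applied to this combination and rearranged yields
\begin{equation*}
h(x) \ \geq\ \frac{\|x\|}{r}\bigl(m - h(0)\bigr) + h(0).
\end{equation*}
Since $m - h(0) > 0$, the right-hand side tends to $\infty$ as $\|x\| \to \infty$, so for any sequence $x_i$ with $\|x_i\| \to \infty$ we get $h(x_i) \to \infty$.

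The only delicate step is the strict gap $m > h(0)$, which is the place where Legendre structure (strict convexity on the interior) is essential rather than decorative: without it, $h$ could be constant in a neighborhood of $0$, the above argument would collapse, and in fact the conclusion of the lemma could fail. Everything else is a one-line convexity estimate plus a compactness argument for attainment of the minimum on the sphere, and notably no appeal to the gradient blow-up half of essential smoothness is needed.
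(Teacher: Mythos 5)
Your proof is correct and follows essentially the same route as the paper's: place $0$ in $\interior(\dom h)$ via Lemma \ref{lemma:minimaoflegendrefns}, establish a strict gap between $h(0)$ and the minimum of $h$ on a small sphere around $0$, and propagate that gap linearly outward by convexity. The only cosmetic differences are that you derive the strict gap $m > h(0)$ by an explicit strict-convexity contradiction rather than by citing uniqueness of the minimizer, and you explicitly dispose of the $x \notin \dom h$ case, both of which the paper handles implicitly.
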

\begin{proof}
First, by Lemma \ref{lemma:minimaoflegendrefns} it follows that $0 \in \interior(\dom h)$. Because $h$ is strictly convex, $0$ is the unique minimum of $h$. Thus, we can define the sphere $\sphere = \{x \in \R^d : \norm{x} = r\}$ for some $r>0$ such that $\sphere \in \interior (\dom h)$. By continuity of $h$ in the interior of its domain, and the uniqueness of the minimum at zero, we have $\inf_{x\in \sphere} h(x)>h(0)$. Now, assume without loss of generality that $\norm{x_i} > r$. By strict convexity of Legendre functions, property \ref{def:legendre:strictlyconvex}, we have
\begin{equation}
    h(0) + \frac{\norm{x_i}}{r}\l(h\l(\frac{r x_i}{\norm{x_i}}\r) - h(0)\r) < h(0) + \l(h(x_i) - h(0)\r)
\end{equation}
and thus
\begin{equation}
    h(x_i)>h(0) + \frac{\norm{x_i}}{r} \l(\inf_{x\in \sphere} h(x) - h(0)\r).
\end{equation}
Our result follows by taking $i \to \infty$.
\end{proof}

 A second key consequence of Legendre structure is that the gradient map $\grad h$ is invertible and given by $(\grad h)^{-1} = \grad h^*$, which also gives a characterization of the inverse of $\hess h(x)$. We summarize both of these  properties in Lemma \ref{lemma:legendretwicediff}.

\begin{lemma}
\label{lemma:legendretwicediff}
Let $h : \R^d \to \R \cup \{\infty\}$ be Legendre convex. Then, $h^*$ is Legendre convex, the map $\grad h$ is one-to-one and onto from the open set $\interior(\dom h)$ onto the open set $\interior (\dom h^*)$, continuous in both directions, and for all $x \in \interior (\dom h)$
\begin{equation}
    \label{eq:conjugateinvgrad} \grad h^*(\grad h(x)) = x.
\end{equation}
If $h$ is $C^2$ on an open set containing $x$ and $\det \hess h(x) \neq 0$, then
\begin{equation}
    \label{eq:conjugateinvhess} \hess h^*(\grad h(x)) \hess h(x) =  \hess h(x) \hess h^*(\grad h(x)) = I.
\end{equation}
\end{lemma}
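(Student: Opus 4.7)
My plan is to establish the first three assertions by a direct citation of the classical Legendre duality theorem, and then to derive the Hessian inversion formula \eqref{eq:conjugateinvhess} by applying the inverse function theorem to the already-established gradient bijection.

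For the claims that $h^*$ is Legendre convex, that $\grad h$ is a homeomorphism from $\interior(\dom h)$ onto $\interior(\dom h^*)$, and that the identity \eqref{eq:conjugateinvgrad} holds, the plan is to invoke \cite[Thm. 26.5]{rockafellar1970convex}. Our Definition \ref{def:legendre} is precisely Rockafellar's notion of a convex function of Legendre type, so his theorem asserts verbatim that $h$ is Legendre if and only if $h^*$ is Legendre, and that $\grad h$ and $\grad h^*$ are continuous, mutually inverse bijections between the interiors of the respective domains. The only bookkeeping is verifying the correspondence between the two definitions of ``essential smoothness,'' which is immediate.

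For \eqref{eq:conjugateinvhess}, the idea is to differentiate \eqref{eq:conjugateinvgrad} via the chain rule, which requires first knowing that $\grad h^*$ is itself $C^1$ in a neighborhood of $\grad h(x)$. I would argue as follows. Since $h$ is $C^2$ on an open neighborhood of $x$, the map $\grad h$ is $C^1$ there with Jacobian $\hess h(x)$. The assumption $\det \hess h(x) \neq 0$ lets me apply the inverse function theorem to obtain a local $C^1$ inverse of $\grad h$ defined on an open neighborhood of $\grad h(x)$. By the global bijectivity of $\grad h$ together with \eqref{eq:conjugateinvgrad}, this local inverse must coincide with the restriction of $\grad h^*$ to that neighborhood, so $\grad h^*$ is $C^1$ at $\grad h(x)$ with Jacobian $\hess h^*(\grad h(x))$. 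Differentiating \eqref{eq:conjugateinvgrad} by the chain rule then yields $\hess h^*(\grad h(x))\, \hess h(x) = I$; the other equality follows either by symmetry of the Hessian matrices or by applying the same argument to $h^*$ in place of $h$ and using $(h^*)^* = h$.

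Because all the heavy lifting is carried by Theorem 26.5 and the inverse function theorem, I do not anticipate any serious obstacle. The two points requiring minor attention are (i) confirming that our definition of Legendre matches Rockafellar's, and (ii) ensuring that the $C^2$ and nondegeneracy hypotheses are strong enough to conclude local $C^1$ smoothness of $\grad h^*$ on a neighborhood of $\grad h(x)$, which is precisely what the chain rule step needs.
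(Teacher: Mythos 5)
Your proposal follows essentially the same route as the paper: both cite \cite[Thm. 26.5]{rockafellar1970convex} for the Legendre duality and homeomorphism claims, and both obtain \eqref{eq:conjugateinvhess} by invoking the inverse function theorem to get local $C^1$ regularity of $\grad h^*$ and then differentiating \eqref{eq:conjugateinvgrad} via the chain rule. You spell out in slightly more detail the identification of the local inverse with $\grad h^*$ and the two-sided equality, but the argument is the same.
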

\begin{proof}
    For the first part see Rockafellar \cite[Thm. 26.5]{rockafellar1970convex}. For \eqref{eq:conjugateinvhess}, note that, by the inverse function theorem, $\grad h^*$ is continuously differentiable at $\grad h(x)$ under the assumption that $\grad h$ is continuously differentiable on an open set containing $x$. The remainder follows by the chain rule applied to (\ref{eq:conjugateinvgrad}).
\end{proof}

\subsection{Relative smoothness and relative strong convexity} Analyses of first-order methods for differentiable optimization typically require that $\grad f$ is Lipschitz continuous (smooth). Recent generalizations of smoothness (and strong convexity) \cite{bauschke2016descent, lu2018relatively} can be used to guarantee convergence of first-order methods beyond the Lipschitz $\grad f$ case. We will use these in our analysis of dual preconditioning. Following \cite{bauschke2016descent}, we define these relative conditions in terms of zeroth-order properties.
\begin{definition}[Relative smoothness and strong convexity]
  \label{def:relativesmoothness}
Let $h,g : \R^d \to \R \cup \{\infty\}$ be proper closed convex functions, $Q \subseteq \dom h \cap \dom g$ be a convex set, and $L, \mu \geq 0$. Define $d_L, d_{\mu} : \R^d \to \R \cup \{\infty\}$ for $x \in Q$ by
\begin{align}
    d_L(x) = L g(x) - h(x) \qquad d_{\mu}(x) = h(x) - \mu g(x)
\end{align}
and for $x \notin Q$ by $d_L(x) = d_{\mu}(x) = \infty$.
$h$ is $L$-smooth relative to $g$ on $Q$, if $d_L$ convex. $h$ is $\mu$-strongly convex relative to $g$ on $Q$, if $d_{\mu}$ is convex.
\end{definition}
\noindent The special cases with $g(x) = \norm{x}_2^2/2$ are exactly the classical conditions of strong convexity and smoothness.  We now provide first- and second-order characterizations.

\subsection{First-order characterizations for relative conditions}
 The first-order characterizations of relative smoothness and strong convexity are given in terms of the Bregman divergence \cite{bregman1967relaxation, banerjee2005clustering}, which for essentially smooth convex $h : \R^d \to \R \cup \{\infty\}$ and $x \in \dom h, y \in \interior (\dom h)$ is given by $h(x) - h(y) - \inner{\nabla h(y)}{x-y}$. Unfortunately, in our analysis, we will require smoothness relative to $f^*$, which can fail to be differentiable when $f$ is essentially smooth. Thus, we will make use of a generalization of the Bregman divergence, which we define via the \emph{one-sided directional derivative} of $h : \R^d \to \R \cup \{-\infty, \infty\}$ with respect to $y \in \R^d$ at a point $x$ where $h$ is finite:
  \begin{equation}
      h'(x; y) = \lim_{\epsilon \downarrow 0} \frac{h(x+ \epsilon y) - h(x)}{\epsilon}.
  \end{equation}
This may take values in $\{\infty, -\infty\}$. The advantage of one-sided direction derivatives is that they always exist at $x \in \dom h$ for proper convex $h$ \cite[Thm. 23.1]{rockafellar1970convex}. We are now prepared to define a novel generalization of the Bregman divergence.

\begin{definition}[Generalized Bregman divergences]
Let $h : \R^d \to \R \cup \{-\infty, \infty\}$ be a function. Let $x,y \in \R^d$ be points at which $h$ is finite and $h'(y; x-y)$ exists. Define the generalized Bregman divergence,
\begin{equation}
    D_h(x,y) = h(x) - h(y) - h'(y; x-y).
\end{equation}
If $h$ is proper, closed, and convex, then $D_h(x,y)$ is defined for all $x,y \in \dom h$ \cite[Thm. 23.1]{rockafellar1970convex} and is finite for $y\in \dom \partial h = \{x \in \R^d : \partial h(x) \neq \emptyset\}$ \cite[Thm. 23.2]{rockafellar1970convex}.
\end{definition}
\noindent Clearly, $D_h(x,y)$ coincides with the standard Bregman divergence if $h$ is differentiable at $y$. The advantage of the generalization is that it allows us to define the relative conditions in terms of first-order properties without the assumption of differentiability.

\begin{proposition}[First-order characterizations of relative conditions]
\label{prop:relativeconditions}
Let $h,g : \R^d \to \R \cup \{\infty\}$ be proper closed convex functions, $Q \subseteq \dom h \cap \dom g$ be a convex set, and $L, \mu \geq 0$.
The following are equivalent
\begin{enumerate}
    \item $h$ is $\primalL$-smooth relative to $g$ on $Q$.
    \item For all $x, y  \in Q$,
    $D_h(x,y) \leq \primalL D_g(x,y)$.
\end{enumerate}
The following are equivalent
\begin{enumerate}
    \item[3.] $h$ is $\primalmu$-strongly convex relative to $g$ on $Q$.
    \item[4.] For all $x, y  \in Q$,
    $\primalmu D_g(x,y) \leq D_h(x,y)$.
\end{enumerate}
\end{proposition}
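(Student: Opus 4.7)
My plan is to reduce the proposition to a single ``convex function nonnegative Bregman divergence'' equivalence, then apply it to the auxiliary functions $d_L$ and $d_\mu$ after checking that the generalized Bregman divergence is linear in its defining function.

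The key lemma I would prove first is: for any proper closed convex function $d : \R^d \to \R \cup \{\infty\}$ and any convex $Q \subseteq \dom d$, $d$ is convex on $Q$ if and only if $D_d(x,y) \geq 0$ for all $x, y \in Q$. For the forward direction, I use that the difference quotient $\epsilon \mapsto (d(y + \epsilon(x-y)) - d(y))/\epsilon$ is non-decreasing in $\epsilon > 0$ by convexity, so letting $\epsilon \downarrow 0$ gives $d'(y; x-y) \leq d(x) - d(y)$, i.e.\ $D_d(x,y) \geq 0$. For the converse, fix $x, y \in Q$ and $t \in (0,1)$, set $z = tx + (1-t)y$. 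Apply $D_d(x,z) \geq 0$ and $D_d(y,z) \geq 0$, noting that $x - z = (1-t)(x-y)$ and $y - z = -t(x-y)$, so by positive homogeneity $d'(z; x-z) = (1-t) d'(z; x-y)$ and $d'(z; y-z) = t \, d'(z; -(x-y))$. Multiplying the two inequalities by $t$ and $(1-t)$ respectively and adding yields
\begin{equation*}
    t\, d(x) + (1-t)\, d(y) \geq d(z) + t(1-t)\bigl[d'(z; x-y) + d'(z; -(x-y))\bigr],
\end{equation*}
and the bracketed term is $\geq 0$ since $d(z + \epsilon v) + d(z - \epsilon v) \geq 2 d(z)$ for convex $d$ forces $d'(z; v) + d'(z; -v) \geq 0$ whenever both exist.

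The second key observation is that the generalized Bregman divergence is linear in its defining function: whenever the one-sided directional derivatives of $h$ and $g$ at $y$ in direction $x - y$ are finite,
\begin{equation*}
    (L g - h)'(y; x-y) = L\, g'(y; x-y) - h'(y; x-y),
\end{equation*}
directly from the definition as a limit, and therefore $D_{Lg - h}(x,y) = L D_g(x,y) - D_h(x,y)$. The analogous identity $D_{h - \mu g}(x,y) = D_h(x,y) - \mu D_g(x,y)$ holds. Combined with the lemma applied to $d_L$ and $d_\mu$ (extended to $+\infty$ outside $Q$, which preserves convexity since $Q$ is convex), the two pairs of equivalences follow immediately: $d_L$ convex $\Leftrightarrow$ $L D_g(x,y) - D_h(x,y) \geq 0$ on $Q$, and similarly for $d_\mu$.

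The main obstacle is handling points $y$ where $h'(y; x-y)$ or $g'(y; x-y)$ may take the value $-\infty$, since then the linearity identity is ill-posed. I would address this by observing that for proper closed convex functions, one-sided directional derivatives at any $y \in \dom h$ along a direction $x - y$ with $x \in \dom h$ exist in $[-\infty, \infty)$ by \cite[Thm.~23.1]{rockafellar1970convex}, and the case $d'(y; x-y) = -\infty$ makes both sides of the generalized Bregman inequality vacuously consistent (the ``Bregman divergence'' equals $+\infty$, the convexity subgradient inequality is trivially satisfied). So one can either restrict the argument to the finite case and handle the degenerate case separately, or equivalently observe that all the comparisons $D_h(x,y) \leq L D_g(x,y)$ and $\mu D_g(x,y) \leq D_h(x,y)$ make sense in the extended reals and the equivalences go through termwise.
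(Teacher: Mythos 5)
Your forward direction and the linearity observation $D_{Lg-h}(x,y) = L\,D_g(x,y) - D_h(x,y)$ (when the directional derivatives are finite) match the paper's argument. The problem is in the converse direction of your key lemma, where the chain of reasoning is circular: after deriving
\begin{equation*}
t\, d(x) + (1-t)\, d(y) \geq d(z) + t(1-t)\bigl[d'(z; x-y) + d'(z; -(x-y))\bigr],
\end{equation*}
you assert that the bracketed term is nonnegative ``since $d(z+\epsilon v) + d(z - \epsilon v) \ge 2d(z)$ for convex $d$,'' but convexity of $d$ is exactly what you are trying to establish. The bracket $d'(z;v) + d'(z;-v) \ge 0$ (in 1D, the right derivative dominating the left derivative) is a nontrivial consequence of the global Bregman-nonnegativity hypothesis, and it does not follow from just the two local inequalities $D_d(x,z) \ge 0$ and $D_d(y,z) \ge 0$ that your argument invokes. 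Indeed, a function with an upward cusp at $z$ (take $d(u) = 1 - |u - 1/2|$ in one dimension, $z = 1/2$) satisfies $D_d(x, z) \ge 0$ and $D_d(y, z) \ge 0$ for all $x, y$ yet fails convexity; what rules it out under the full hypothesis is a long-range inequality $D_d(x,y)$ with $y < z < x$, which your argument never uses. A correct converse needs to aggregate the Bregman inequalities along the segment, which is precisely what the paper does: it restricts $d_L$ to the segment, verifies the first-order tangent-underestimator condition for each ordered pair $s < t$, and invokes its Lemma 2.8 (one-dimensional convexity via a mean-value-theorem argument, Lemma 2.7). You would need to prove and use an analogue of that lemma, or argue that the bracket is nonnegative from the hypothesis rather than from the conclusion.

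As a secondary issue, your key lemma is stated for ``proper closed convex $d$,'' which makes the equivalence trivial (a globally convex function is convex on $Q$); you presumably intend $d$ to be merely proper closed with one-sided directional derivatives defined on $Q$. But for a function not assumed convex, the existence and properties of those directional derivatives need justification. The paper sidesteps this by working directly with $h$ and $g$ (which are convex, so Rockafellar Thm.~23.1 applies) and only forming $D_{d_L} = L D_g - D_h$ after checking finiteness of $h'$ and $g'$ at interior points of the segment.
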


To prove these equivalences, we will use two lemmas, which extend the first-order characterization of one-dimensional convexity to the non-differentiable case.

\begin{lemma}[A variant of the mean value theorem]
\label{lem:ourmvt}
 Let $r \colon [0,1] \to \R$ be a continuous function. Define $r'_+(z)= \lim_{\epsilon \downarrow 0} (r(z + \epsilon) - r(z))/\epsilon$. Assuming that $r'_+(z)$ exists for $z \in [0,1)$, if $s, t \in [0,1]$ and $s < t$, then there is exists $z \in [s,t)$ such that $r'_+(z) \ge (r(t) - r(s))/(t-s)$.
\end{lemma}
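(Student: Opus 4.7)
The plan is to reduce this to a one-sided analogue of Rolle's theorem by subtracting the secant line. Let $m = (r(t) - r(s))/(t-s)$ and define
\begin{equation*}
\phi(z) = r(z) - r(s) - m(z-s), \qquad z \in [s,t].
\end{equation*}
Then $\phi$ is continuous on $[s,t]$, satisfies $\phi(s) = \phi(t) = 0$, and $\phi'_+(z) = r'_+(z) - m$ exists whenever $r'_+(z)$ does. Since $s < t \leq 1$, we have $[s,t) \subseteq [0,1)$, so $\phi'_+$ exists on $[s,t)$. Hence it suffices to produce a point $z \in [s,t)$ with $\phi'_+(z) \geq 0$.

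By continuity of $\phi$ on the compact interval $[s,t]$, the infimum $M = \min_{[s,t]} \phi$ is attained. I split on its sign.

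If $M = 0$, then $\phi \geq 0$ on $[s,t]$ and $\phi(s) = 0$. Taking $z = s \in [s,t)$, the difference quotient satisfies $(\phi(s+\epsilon) - \phi(s))/\epsilon = \phi(s+\epsilon)/\epsilon \geq 0$ for every sufficiently small $\epsilon > 0$, so $\phi'_+(s) \geq 0$, giving $r'_+(s) \geq m$.

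If $M < 0$, the key trick is to choose the \emph{rightmost} minimizer. Let $z^* = \sup\{z \in [s,t] : \phi(z) = M\}$. By continuity $\phi(z^*) = M < 0$, and since $\phi(s) = \phi(t) = 0 \neq M$, we have $z^* \in (s,t) \subseteq [s,t)$. For every $\epsilon \in (0, t - z^*)$ the point $z^* + \epsilon$ lies in $[s,t]$ but is not in the argmin set (by definition of the supremum), so $\phi(z^* + \epsilon) > M = \phi(z^*)$. Therefore
\begin{equation*}
\phi'_+(z^*) = \lim_{\epsilon \downarrow 0} \frac{\phi(z^* + \epsilon) - \phi(z^*)}{\epsilon} \geq 0,
\end{equation*}
which yields $r'_+(z^*) \geq m$ as required.

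The only mild subtlety is the argument in the case $M < 0$: one must pick the supremum of the set of minimizers (not the infimum or an arbitrary minimizer) to ensure that points immediately to the right have strictly larger $\phi$-value, which gives the correct sign for the one-sided derivative. Everything else is a direct application of compactness and continuity, with no use of differentiability beyond the existence of $r'_+$ at the single chosen point.
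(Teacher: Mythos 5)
Your proof is correct and follows essentially the same approach as the paper's: subtract the secant line, locate a minimizer of the resulting function on $[s,t]$, and observe that the one-sided difference quotient at a minimizer is non-negative. One small point of criticism: the ``key trick'' you highlight at the end is a misdiagnosis. In the case $M<0$, \emph{any} minimizer lies in $(s,t)$, since $\phi(s)=\phi(t)=0>M$, and any minimizer $z$ already gives $\phi(u)\ge\phi(z)$ for all $u\in[z,t]$, which is all you need for $\phi'_+(z)\ge 0$; strict inequality to the right is not required because a limit of non-negative quotients is non-negative. The paper handles the boundary case more economically by noting that if the minimizer happens to be $t$, then $r(t)=r(s)$ forces $s$ to be a minimizer too, so one may simply relocate to $s$, avoiding any case split on the sign of the minimum.
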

\begin{proof}
We can add a linear function to $r$ without changing the difference between the two sides on the inequality, so without loss of generality we may assume that $r(s) = r(t) = 0$.
Then, we want to prove that $r'_+(z) \ge 0$ for some $z \in [s, t]$.
Since $r$ is continuous, it has a minimum in $[s, t]$, so there is a $z \in [s,t]$ such that $r(u) \ge r(z)$ for every $u \in [s,t]$.
If $z = t$, then $r(z) = r(s)$, so we could instead take $z = s$.
Thus we may assume that $z \in [s,t)$.
Then $r'_+(z) = \lim_{u \downarrow z, \, u \in (z, t)} \frac{r(u)-r(z)}{u-z} \ge 0$, because $r(u) \ge r(z)$, proving the claim.
\end{proof}

\begin{lemma}[Characterization of one-dimensional convexity]\label{lem:convexfromaboveright}
Let $r \colon [0,1] \to \R$ be a continuous function. $r$ is convex on $[0,1]$ if and only if
$r'_+(z)$ (defined in Lemma \ref{lem:ourmvt}) exists for all $z \in (0,1)$, and for all $s,t \in (0,1)$ such that $s < t$,
\begin{equation}
    \label{eq:oneddirtangent} r(t) \ge r(s) + r'_+(s) (t-s).
\end{equation}
\end{lemma}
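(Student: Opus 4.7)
The plan is to handle the two directions separately, with the backward implication doing the main work.

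\textbf{Forward direction.} Suppose $r$ is convex on $[0,1]$. I would first show that $r'_+(z)$ exists and is finite for every $z\in(0,1)$. By the three-slopes inequality for convex functions, the difference quotient $\epsilon\mapsto (r(z+\epsilon)-r(z))/\epsilon$ is non-increasing as $\epsilon\downarrow 0$ and is bounded below by any backward slope $(r(z)-r(z-\delta))/\delta$ with $\delta\in(0,z)$, so the limit exists and is finite. The tangent inequality \eqref{eq:oneddirtangent} then follows by letting $\epsilon\downarrow 0$ in the chord bound $(r(s+\epsilon)-r(s))/\epsilon\le (r(t)-r(s))/(t-s)$, which holds for $0<\epsilon<t-s$ by convexity.

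\textbf{Backward direction.} Proceed by contradiction. If $r$ were not convex on $(0,1)$, there would exist $a<b$ in $(0,1)$ such that the chord $\ell$ joining $(a,r(a))$ and $(b,r(b))$ lies strictly below $r$ at some point of $(a,b)$. Set $\rho := r-\ell$; this is continuous on $[a,b]$ with $\rho(a)=\rho(b)=0$ and $M := \max_{[a,b]}\rho > 0$, attained at some $u^*\in(a,b)$. Since $\ell$ is affine with some slope $c$, we have $\rho'_+ = r'_+ - c$ on $(0,1)$, and $\rho$ inherits the forward tangent inequality.

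The crucial step is selecting a good base point: let $y^* := \sup\{y\in[a,u^*]:\rho(y)\le 0\}$. Continuity makes this set closed, so $\rho(y^*)\le 0$; since $\rho(u^*)=M>0$, we have $y^*<u^*$ and $\rho>0$ on $(y^*,u^*]$, and right-continuity then forces $\rho(y^*)=0$. Applying Lemma~\ref{lem:ourmvt} to $\rho$ on $[y^*,u^*]$ yields a point $z\in[y^*,u^*)$ with $\rho'_+(z)\ge M/(u^*-y^*)>0$. The tangent hypothesis at $z$ taken forward to $b$ gives $0=\rho(b)\ge \rho(z)+\rho'_+(z)(b-z)$, which rearranges to $\rho(z)\le -M(b-u^*)/(u^*-y^*)<0$. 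This contradicts both alternatives: if $z=y^*$, then $0=\rho(y^*)<0$; if $z>y^*$, then $z\in(y^*,u^*)$ where $\rho$ is positive.

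Convexity on $(0,1)$ extends to $[0,1]$ by continuity: any chord inequality involving an endpoint is recovered by approximating from inside with $\epsilon\downarrow 0$ (or $1-\epsilon\uparrow 1$) and using continuity of $r$. The main obstacle I expect is the choice of base point in the backward direction: the asymmetric forward hypothesis does not immediately imply monotonicity of $r'_+$, and a naive argument centred at $u^*$ yields only $\rho'_+(u^*)\le 0$, which is too weak. Choosing $y^*$, the rightmost point where $\rho$ is still $\le 0$ before ascending to its maximum, is what turns Lemma~\ref{lem:ourmvt} into a sharp enough tool to produce a direct contradiction with the tangent inequality applied at $z$.
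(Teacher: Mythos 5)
Your proof is correct and takes essentially the same route as the paper's: contrapositive in the backward direction, normalize away the chord, apply the mean-value variant (Lemma~\ref{lem:ourmvt}) on a subinterval left of the maximizer to obtain a point with nonnegative forward slope, then push the tangent inequality \eqref{eq:oneddirtangent} forward to the right endpoint for a contradiction. The only difference is the choice of base interval for the mean-value step: the paper takes any $u$ close enough to the maximizer $z$ that $r>0$ on $[u,z]$, so that $r'_+(v)\ge 0$ together with $r(v)>0$ suffices, whereas you take the last zero crossing $y^*$ and consequently need the sharper slope bound $\rho'_+(z)\ge M/(u^*-y^*)>0$ to dispose of the boundary case $z=y^*$ where $\rho(z)=0$.
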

\begin{proof}
Suppose that $r$ is not convex on $[0,1]$. Then by continuity, it is also not convex on $(0,1)$.
After adding a linear function, we can arrange that $0 = r(s) = r(t) < r(z)$ for some $0 < s < z < t <1$.
Since $r$ is continuous, $r$ achieves its maximum restricted to the interval $[s,t]$, so we could choose $z \in (s,t)$ such that $r(z) = \max_{u \in [s,t]} r(u) > 0$.
Since $r(z)>0$ and $r$ is continuous, there is a $u \in (s,z)$ such that $r(v) > 0$ for every $v \in [u,z]$.
By Lemma \ref{lem:ourmvt}, there is a $v \in [u,z)$ such that $r_+'(v) \ge \frac{r(z) - r(u)}{z-u} \ge 0$, and so $0 = r(t) \ge r(v) + r'_+(v) (t-v) \ge r(v) > 0$.
This contradiction proves that $r$ is indeed convex.

Now suppose that $r$ is convex and continuous on $[0,1]$. By \cite[Thm. 23.1]{rockafellar1970convex} the difference quotient $\frac{r(z+\epsilon)-r(z)}{\epsilon}$ is a non-decreasing function of $\epsilon$ for $\epsilon > 0$ and $z \in [0, 1)$, and limit $r'_+(z)$ exists.
Using the fact that the difference quotient is non-decreasing in $\epsilon$ it follows that $r(t) \ge r(s) +  r'_+(s)(t-s)$ for every $s,t\in [0,1]$, $s<t$.
\end{proof}

We are now prepared to provide the proof of equivalence between the zeroth- and first-order definitions of the relative conditions.

\begin{proof}[Proof of Proposition \ref{prop:relativeconditions}]
We only show the equivalence of 1. and 2., the proof of the equivalence of 3. and 4. is similar. First, suppose that 1. holds, \ie{} $d_L$ is convex. Let $x,y \in Q$. Then for $x_t = y + t(x-y)$, we have (after dividing by $t$ and rearranging)
\begin{align}\label{eq:dLconvexityargument}
    h(x) - h(y) - \frac{h(x_t) - h(y)}{t} \leq  Lg(x) - Lg(y) - L\frac{g(x_t) - g(y)}{t}
\end{align}
Taking the limit $t \downarrow 0$ gives us that $D_h(x,y)\le L D_g(x,y)$, with the existence of the limits following from \cite[Thm. 23.1]{rockafellar1970convex}.

For the other direction, suppose that $D_h(x,y)\le L D_g(x,y)$ for every $x,y\in Q$.
Let $x,y \in Q$, and $x_t = y + t(x-y)$. Then for any $0<s<t<1$, it is easy to check that both
$h'(x_t; x_s - x_t)$ and $g'(x_t; x_s - x_t)$ are finite (if one of the directional derivatives is non-finite, then this would contradict the convexity or finiteness of these functions over $Q$). Thus $D_h(x_s,x_t)$ and $D_g(x_s,x_t)$ are finite and satisfy $D_h(x_s,x_t)\le L D_g(x_s,x_t)$ . Thus, it follows that for any $0<s<t<1$,
\begin{equation}\label{eq:DdLpos}
D_{d_L}(x_s,x_t)=L D_g(x_s,x_t)-D_h(x_s,x_t)\ge 0.
\end{equation}
Let $r(t)=d_L(x_t)$ for $t\in [0,1]$ be the restriction of $d_L$ on the line segment between $x$ and $y$, then \eqref{eq:DdLpos} implies that the condition \eqref{eq:oneddirtangent} holds. $r$ is a continuous function by \cite[Thm. 10.2]{rockafellar1970convex}. Thus, by Lemma \ref{lem:convexfromaboveright}, $r$ is a convex function on $[0,1]$. This holds for all $x,y\in Q$, thus $d_L$ is convex.

\end{proof}

\subsection{Second-order characterizations of relative conditions}
Verifying relative smoothness or strong convexity is typically done via second-order conditions. Just as Lipschitz continuity of $\grad h$ can be characterized by a bound on $\hess h$, the relative conditions can be characterized by the second derivatives of $h$ and $g$ \cite{bauschke2016descent, lu2018relatively}. Proposition \ref{lemma:twicediffrelativecondition} allows $\hess h, \hess g$ to be undefined at a point, a slight generalization of the standard result that is useful in our analysis when $\hess f$ is undefined at $\xmin$.

\begin{proposition}[Second-order characterizations of relative conditions]
\label{lemma:twicediffrelativecondition}
Let  $h, g: \R^d \to  \R \cup \{\infty\}$ be proper closed convex functions that are differentiable on the interior of their domains. Let $Q \subseteq \interior(\dom g) \cap \interior(\dom h)$ be an open convex set,  $z \in Q$, and $L, \mu \geq 0$. If $h, g$ are $C^2$ on $Q \setminus \{z\}$, then
\begin{enumerate}
    \item $h$ is $\primalL$-smooth relative to $g$ on $Q$ if and only if,
\[\hess h(x) \preceq \primalL \hess g(x) \qquad \forall x \in Q \setminus \{z\}.\]
    \item $h$ is $\primalmu$-strongly convex relative to $g$ on $Q$  if and only if,
\[\primalmu \hess g(x) \preceq \hess h(x) \qquad \forall x \in Q \setminus \{z\}.\]
\end{enumerate}
\end{proposition}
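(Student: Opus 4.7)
The plan is to reduce both directions to the convexity of $d_L := Lg - h$ (respectively $d_\mu := h - \mu g$) on $Q$, and then use a one-dimensional argument that tolerates a single exceptional point at which the second derivative is not defined. I will write the argument for smoothness; the proof for strong convexity is identical with the roles of $Lg$ and $h$ reversed.

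First I would record the basic regularity: $h$ and $g$ are convex and differentiable on the open set $Q$, so by Rockafellar \cite[Thm. 25.5]{rockafellar1970convex} the gradients $\grad h$ and $\grad g$ are continuous on $Q$. Thus $d_L = Lg - h$ is $C^1$ on all of $Q$, and by hypothesis $d_L$ is $C^2$ on $Q \setminus \{z\}$. By Definition \ref{def:relativesmoothness}, $h$ is $L$-smooth relative to $g$ on $Q$ iff $d_L$ is convex on $Q$. So the entire proposition reduces to the assertion that $d_L$ is convex on $Q$ iff $\hess d_L(x) = L\hess g(x) - \hess h(x) \succeq 0$ for every $x \in Q \setminus \{z\}$.

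Next I would pass to one dimension. For any $x,y \in Q$ set $r(t) = d_L(x + t(y-x))$ for $t \in [0,1]$; then $r \in C^1[0,1]$, and $r$ is $C^2$ on $[0,1]$ outside at most one point $t_0$ (the value, if any, for which $x + t_0(y-x) = z$). Also $r''(t) = \inner{y-x}{\hess d_L(x+t(y-x))(y-x)}$ wherever the second derivative exists. The function $d_L$ is convex on $Q$ iff every such $r$ is convex on $[0,1]$, and $\hess d_L \succeq 0$ on $Q \setminus \{z\}$ iff for every $x,y \in Q$ one has $r''(t) \ge 0$ on $[0,1] \setminus \{t_0\}$ (the nontrivial forward direction uses that we can choose $x, y$ so as to probe $\hess d_L$ at any point of $Q \setminus \{z\}$ in any direction).

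The main step, and the only point where the exceptional $z$ matters, is the one-dimensional claim: if $r \in C^1[0,1]$ and $r''(t) \ge 0$ for all $t \in [0,1] \setminus \{t_0\}$, then $r$ is convex on $[0,1]$. The proof is short: $r'$ is non-decreasing on $[0,t_0]$ and on $[t_0,1]$, and because $r'$ is continuous at $t_0$, the two monotone pieces join into a non-decreasing function on $[0,1]$, which is equivalent to convexity of $r$. For the other direction, if $r$ is convex on $[0,1]$ and $C^2$ at $t \ne t_0$, then the standard second-order characterization forces $r''(t) \ge 0$. Combining this one-variable equivalence with the reduction above gives both directions of part~1. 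The argument for part~2 is obtained by replacing $d_L$ with $d_\mu = h - \mu g$. The main obstacle is precisely this handling of the singleton $\{z\}$, which is overcome once one observes that $C^1$-regularity of $d_L$ on $Q$ (guaranteed by continuity of $\grad h, \grad g$ on the open set $Q$) allows a single break in the second derivative without breaking convexity.
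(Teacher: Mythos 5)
Your argument is correct and follows essentially the same route as the paper: both reduce to one dimension along a segment, invoke the $C^1$ regularity of $d_L$ on $Q$ (via \cite[Thm.\ 25.5]{rockafellar1970convex}), and conclude convexity by showing $r'$ is non-decreasing despite the single point where $r''$ may be undefined. The paper expresses this last step through an integral representation $r'(t) - r'(a) = \lim_{s\to a}\int_s^t r''$, whereas you patch the two monotone pieces of $r'$ at $t_0$ using continuity — the same idea in a slightly different dress.
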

\begin{proof}
Again, we prove the relative smoothness equivalence, and relative strong convexity follows similarly. For relative smoothness, $(\Rightarrow)$ follows from part one of \cite[Thm. 2.1.4]{nesterov2018lectures} applied to $d_L$ at $x \in Q$. For $(\Leftarrow)$, it is sufficient to prove convexity of the restriction of $d_L$ to an open line segment with endpoints $x,y \in Q$. Let $x_t = y + t(x-y)$ and $r(t) = d_L(x_t)$ for $t \in (0,1)$. Let $a \in (0, 1)$ be such that $x_a = z$, if it exists, or some arbitrary $a \in (0,1)$, otherwise. $d_L$ is continuously differentiable at all $x \in Q$ by \cite[Thm 25.5]{rockafellar1970convex}. Thus $r'(t) = \inner{\grad d_L(x_t)}{x-y}$ is a continuous and finite function of $t \in (0,1)$. If $r'$ is non-decreasing, then the argument of \cite[Thm. 4.4]{rockafellar1970convex} gives us our result. Thus, with a slight abuse of notation,
\begin{align*}
    r'(t) = r'(a) + r'(t) - r'(a) = r'(a) + \lim_{s \to a} \int_{s}^t \inner{x-y}{\hess d_L(x_t) (x-y)}.
\end{align*}
The limit is actually a one-sided limit, depending on $t \leq a$ or $t > a$. Either way, $\hess d_L(x_t) = L \hess g(x_t) - \hess h(x_t)$ is positive semi-definite, so $r'$ is non-decreasing.
\end{proof}

 \section{Analysis of the dual preconditioned scheme}
\label{sec:dualgradient}

\subsection{Motivation and assumptions}

Relative smoothness of $f$ with respect to a reference function $h$ is the key assumption under which \cite{bauschke2016descent, teboulle2018simplified, lu2018relatively} analyzed the convergence of Bregman gradient methods. We now build towards an analysis of dual space preconditioned gradient descent method (Algorithm \eqref{alg:dualspaceprecon}) using the assumption that $k$ is smooth relative to $f^*$. As shorthand to distinguish these two assumptions, we use the terms \emph{primal relative smoothness} to refer to the condition that $f$ is $L$-smooth relative to $h$ and \emph{dual relative smoothness} to refer to the condition that $k$ is $\dualL$-smooth relative to $f^*$. To motivate our assumption, consider the following idealizations.

Consider the Bregman gradient method update \eqref{eq:mirrordescent}, which can be rewritten as,
\begin{equation}
    \label{eq:mirrordescent2}
    x_{i+1} = \arg \min_{x \in \dom f} \left\{\inner{\grad f(x_i) - L\grad h(x_i)}{x} + L h(x)\right\}.
\end{equation}
In this form, it is clear that, if $h = f$ and $L = 1$, then the iteration would converge in a single step to the minimizer of $h = f$. This is an idealization, because a single iteration would be as expensive to compute as the original problem. The spirit behind primal relative smoothness is that the condition $h = f$ can be relaxed to admit $h$ for which the update \eqref{eq:mirrordescent2} is efficiently solvable and the iterates still converge.

Now, consider the case that $f$ is Legendre convex with a minimum at $\xmin$, and let $f_c^*(\dualvarx) = f^*(\dualvarx) - \inner{\dualvarx}{\xmin}$ for $\dualvarx \in \R^d$. Notice that $\grad f_c^*(\grad f(x)) = x - \xmin$ by Lemma \ref{lemma:legendretwicediff} and that Algorithm \ref{alg:dualspaceprecon} with $k = f_c^*$ and $L_i = 1$ would converge in a single step to the minimizer $\xmin$ of $f$. Thus, in analogy to the relative smoothness analysis of \cite{bauschke2016descent} in the primal space, the spirit behind our analysis under dual relative smoothness is that the requirement $k = f_c^*$ can be relaxed while maintaining the convergence of Algorithm \ref{alg:dualspaceprecon}. In particular, sufficient assumptions on $k$ are that it is minimized at $0$ and smooth relative to $f^*$.

 More precisely, our analysis of Algorithm \ref{alg:dualspaceprecon} uses following assumptions.
\begin{assumption}
\label{assumption:descent}
\begin{enumerate}
    \item $f : \R^d \to \R \cup \{\infty\}$ is convex and essentially smooth.
    \item $k : \R^d \to \R \cup \{\infty\}$ is Legendre convex and uniquely minimized at $0$.
    \item \label{assumption:keyassumption} $\grad f(\interior(\dom f)) \subseteq \interior(\dom k)$ and for all $x,y \in \interior(\dom f)$,
\begin{equation*}
    D_k(\grad f(y), \grad f(x)) \leq \dualL D_f(x,y)
\end{equation*}
\end{enumerate}
\end{assumption}
 As we show in the following sections, Assumption \ref{assumption:descent}.\ref{assumption:keyassumption} is a necessary condition of the relative smoothness of $k$ with respect to $f^*$.  Assumption \ref{assumption:descent}.\ref{assumption:keyassumption} is the assumption that requires the most effort to verify, since the convexity of $\dualL f^* - k$ will typically be difficult to check. For this reason, we also provide second-order sufficient conditions expressed (mostly) in terms of conditions on $f$ and $k$.

\subsection{Dual relative conditions for Legendre convex objectives}  When $f$ is essentially smooth and strictly convex (Legendre), we are able to provide clean characterizations of the dual relative conditions. In particular, Assumption \ref{assumption:descent}.\ref{assumption:keyassumption} is necessary and sufficient for the smoothness of $k$ relative to $f^*$ on $\interior(\dom f^*)$. We begin by linking $D_f$ and $D_{f^*}$ in what is a well-known identity for Legendre convex $f$.

\begin{lemma}
\label{lemma:dualdivergence}
If $f: \R^d \to  \R \cup \{\infty\}$ is an essentially smooth convex function, then
\begin{equation}
D_{f^*}(\grad f(y), \grad f(x)) \leq D_f(x,y)
\end{equation}
for all $x,y \in \interior{(\dom f)}$. If $f$ is Legendre convex, then this is an equality.
\end{lemma}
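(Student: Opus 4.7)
The plan is to reduce both sides to explicit expressions using the Fenchel--Young equality and the inversion of the subdifferential, and then bound the directional derivative term on the left by a linear term that produces exactly $D_f(x,y)$. The tools are: (i) since $f$ is essentially smooth, $\grad f(x)$ is the (unique) element of $\partial f(x)$ for each $x \in \interior(\dom f)$, so Fenchel--Young holds with equality, giving
\[
f^*(\grad f(x)) = \inner{x}{\grad f(x)} - f(x), \qquad f^*(\grad f(y)) = \inner{y}{\grad f(y)} - f(y);
\]
and (ii) the subdifferential inversion $x \in \partial f^*(\grad f(x))$ (standard for proper closed convex $f$), together with the basic fact that for a proper convex function $g$ any subgradient $w \in \partial g(u)$ satisfies $g'(u; v) \geq \inner{w}{v}$ for every direction $v$ for which $g'(u; v)$ is defined \cite[Thm. 23.2]{rockafellar1970convex}.

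First I would expand $D_{f^*}(\grad f(y), \grad f(x))$ according to the generalized Bregman definition and substitute the two Fenchel--Young equalities. This yields
\[
D_{f^*}(\grad f(y), \grad f(x)) = \inner{y}{\grad f(y)} - f(y) - \inner{x}{\grad f(x)} + f(x) - (f^*)'(\grad f(x);\, \grad f(y) - \grad f(x)).
\]
Then I would apply the subgradient inequality $(f^*)'(\grad f(x);\, v) \geq \inner{x}{v}$ with $v = \grad f(y) - \grad f(x)$, which upper-bounds the whole expression by
\[
\inner{y}{\grad f(y)} - f(y) - \inner{x}{\grad f(x)} + f(x) - \inner{x}{\grad f(y) - \grad f(x)} = f(x) - f(y) - \inner{\grad f(y)}{x - y}.
\]
Since $f$ is differentiable at $y \in \interior(\dom f)$, the right-hand side is exactly $D_f(x,y)$, proving the inequality.

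For the equality case, I would invoke Lemma \ref{lemma:legendretwicediff}: when $f$ is Legendre, $f^*$ is also Legendre and hence differentiable at every $\grad f(x) \in \interior(\dom f^*)$, with $\grad f^*(\grad f(x)) = x$. Therefore $\partial f^*(\grad f(x)) = \{x\}$ and the one-sided directional derivative is linear in the direction, so $(f^*)'(\grad f(x); v) = \inner{x}{v}$. The single inequality used above becomes an equality, and the identity $D_{f^*}(\grad f(y), \grad f(x)) = D_f(x,y)$ follows from the same computation.

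The main subtlety, rather than an obstacle, is the careful handling of the generalized Bregman divergence when $f^*$ is not differentiable at $\grad f(x)$ (as may happen in the essentially smooth but non-strictly-convex case). The key point is that even without differentiability of $f^*$, the subgradient inequality for the one-sided directional derivative goes in the right direction to produce an upper bound, which is exactly what the statement requires. All other manipulations are direct algebra after invoking the two standard convex-analytic identities above.
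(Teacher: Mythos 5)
Your proof is correct and follows essentially the same route as the paper: expand the generalized Bregman divergence of $f^*$, replace $f^*(\grad f(x))$ and $f^*(\grad f(y))$ by their Fenchel--Young values, bound the one-sided directional derivative from below via the subgradient inequality with $x \in \partial f^*(\grad f(x))$, and observe that Legendre structure of $f$ makes $f^*$ differentiable so that this bound becomes an equality. The algebraic simplification and the cited facts (Rockafellar Thm.\ 23.2 and the Fenchel equality) match the paper's proof step for step.
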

\begin{proof}
Note, by \cite[Cor. 26.4.1]{rockafellar1970convex}, we have $\grad f(\interior(\dom f)) = \dom \partial f^*$. Thus, $D_{f^*}(\grad f(y), \grad f(x))$ is finite for any $x,y \in \interior(\dom f)$. Note $x \in \partial f^*(\grad f(x))$. Now,
\begin{equation*}
    \begin{aligned}
        D_{f^*}(\grad f(y), \grad f(x)) &= f^*(\grad f(y)) - f^*(\grad f(x)) - (f^*)'(\grad f(x); \grad f(y) - \grad f(x))\\
        &\overset{(a)}{\leq}  f^*(\grad f(y)) - f^*(\grad f(x)) - \inner{x}{\grad f(y) - \grad f(x)}\label{eq:fstarBregmaninequality}\\
        &\overset{(b)}{=} - f(y) + \inner{\grad f(y)}{y} + f(x) - \inner{\grad f(x)}{x}  - \inner{x}{\grad f(y) - \grad f(x)}\\
        &= f(x) - f(y) + \inner{\grad f(y)}{y-x}=D_f(x,y)\\
    \end{aligned}
\end{equation*}
where $(a)$ follows from \cite[Thm. 23.2]{rockafellar1970convex}, $(b)$ follows from \cite[Thm. 26.4]{rockafellar1970convex}. If $f$ is Legendre convex, then by Lemma \ref{lemma:legendretwicediff}, $f^*$ is Legendre, $f^*$ is differentiable on $\interior(\dom f^*) = \grad f(\interior(\dom f))$, and $(a)$ is an equality \cite[Thm. 23.4]{rockafellar1970convex} .
\end{proof}

We can now provide first-order characterizations of the dual relative conditions.

\begin{proposition}[First-order characterization of dual relative conditions, Legendre convex case]
\label{lem:equivalentcondLegendre}
Let $f, k : \R^d \to \{\R, \infty\}$ be Legendre convex functions. The following are equivalent.
\begin{enumerate}
    \item \label{lem:equivalentcond:relsmooth}
    $k$ is $\dualL$-smooth relative to $f^*$ on $\interior(\dom f^*)$.
\item \label{lem:equivalentcond:bregman} $\grad f(\interior (\dom f))\subseteq \interior (\dom k)$, and for all $x,y \in \interior (\dom f)$,
    \begin{equation*}
        D_k(\grad f(y), \grad f(x)) \leq \dualL D_f(x,y).
    \end{equation*}
\end{enumerate}
The following are equivalent.
\begin{enumerate}
    \item[3.] $k$ is $\dualmu$-strongly convex relative to $f^*$ on $\interior(\dom f^*)$.
    \item[4.] $\grad f(\interior (\dom f))\subseteq \interior (\dom k)$, and for all $x,y \in \interior (\dom f)$,
    \begin{equation*}
        \dualmu D_f(x,y) \leq D_k(\grad f(y), \grad f(x)).
    \end{equation*}
\end{enumerate}
\end{proposition}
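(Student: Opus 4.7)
My plan is to apply the first-order characterization of relative smoothness (Proposition \ref{prop:relativeconditions}) with $h = k$, $g = f^*$, and $Q = \interior(\dom f^*)$, and then transport the resulting Bregman inequality across the gradient bijection $\grad f : \interior(\dom f) \to \interior(\dom f^*)$ guaranteed by Lemma \ref{lemma:legendretwicediff}. The identity that makes this transport exact is the equality case of Lemma \ref{lemma:dualdivergence}: since $f$ is Legendre, $D_{f^*}(\grad f(y), \grad f(x)) = D_f(x,y)$ for all $x, y \in \interior(\dom f)$.

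Before invoking Proposition \ref{prop:relativeconditions}, I would reconcile the two formulations of the domain assumption. Condition 1 implicitly requires $Q \subseteq \dom k \cap \dom f^*$ by Definition \ref{def:relativesmoothness}, i.e., $\interior(\dom f^*) \subseteq \dom k$; condition 2 requires $\grad f(\interior(\dom f)) \subseteq \interior(\dom k)$. Since $\grad f(\interior(\dom f)) = \interior(\dom f^*)$ by Lemma \ref{lemma:legendretwicediff} and $\interior(\dom f^*)$ is open, these two inclusions are equivalent: any open subset of $\dom k$ lies inside $\interior(\dom k)$.

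For $(1) \Rightarrow (2)$, I would read the first-order characterization as $D_k(u,v) \leq \dualL D_{f^*}(u,v)$ for all $u, v \in \interior(\dom f^*)$ and substitute $u = \grad f(y)$, $v = \grad f(x)$ for arbitrary $x, y \in \interior(\dom f)$, using Lemma \ref{lemma:dualdivergence} to rewrite the right-hand side as $\dualL D_f(x,y)$. For $(2) \Rightarrow (1)$, given arbitrary $u, v \in \interior(\dom f^*)$, I would run the same change of variables in reverse using $(\grad f)^{-1} = \grad f^*$ (again Lemma \ref{lemma:legendretwicediff}) to obtain the Bregman inequality on $\interior(\dom f^*)$, and then apply Proposition \ref{prop:relativeconditions} once more. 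The equivalence $(3) \Leftrightarrow (4)$ is identical, reversing the direction of the inequality.

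I do not foresee a substantive obstacle; the only delicate ingredient is the exact Bregman identity of Lemma \ref{lemma:dualdivergence}, which is what allows the first-order characterization to transfer cleanly between $\interior(\dom f^*)$ and $\interior(\dom f)$ without losing any slack in the constant $\dualL$ or $\dualmu$.
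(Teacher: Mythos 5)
Your argument is correct and mirrors the paper's own proof: both rely on the chain of Proposition \ref{prop:relativeconditions} (first-order characterization of relative smoothness) together with Lemma \ref{lemma:legendretwicediff} (the gradient bijection $\grad f : \interior(\dom f) \to \interior(\dom f^*)$) and the equality case of Lemma \ref{lemma:dualdivergence} ($D_{f^*}(\grad f(y),\grad f(x)) = D_f(x,y)$). Your explicit reconciliation of the two domain inclusions via openness of $\interior(\dom f^*)$ is exactly the observation the paper makes as well.
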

\begin{proof}
We prove the relative smoothness results, and the relative strong convexity ones follow similarly. First, notice that that $\grad f(\interior(\dom f)) = \interior(\dom f^*)$.
by Lemma \ref{lemma:legendretwicediff}. For (\ref{lem:equivalentcond:relsmooth} $\Rightarrow$ \ref{lem:equivalentcond:bregman}), by the definition of relative smoothness, we have $\interior(\dom f^*)\subseteq \dom k$, and since this is an open set, we necessarily have $\interior(\dom f^*)\in \interior(\dom k)$. By Proposition \ref{prop:relativeconditions} we have that for all $\dualvarx, \dualvary \in \interior (\dom f^*)$,
\begin{align}
    D_{k}(\dualvary, \dualvarx) \leq \dualL D_{f^*}(\dualvary, \dualvarx).
\end{align}
By Lemmas \ref{lemma:legendretwicediff} and \ref{lemma:dualdivergence}, this implies
\begin{align}
    D_{k}(\grad f(y), \grad f(x)) \leq \dualL D_{f^*}(\grad f(y), \grad f(x)) = \dualL D_{f}(x, y),
\end{align}
 for all $x, y \in \interior(\dom f)$. For (\ref{lem:equivalentcond:bregman} $\Rightarrow$ \ref{lem:equivalentcond:relsmooth}), by Lemma \ref{lemma:dualdivergence}, we have for all $x, y \in \interior(\dom f)$,
\begin{equation}\label{eq:LegendreDfDfstar}D_f(x,y)=D_{f^*}(\grad f(y), \grad f(x)).\end{equation}
Using this, Proposition \ref{prop:relativeconditions} implies that $k$ is $L^*$-smooth relative to $f^*$ on $\interior (\dom f^*)$.
\end{proof}

If $f$ is Legendre convex, then the dual relative conditions have a natural second-order characterization, which reveals the structure of the difference between them and primal relative conditions. Again, typically it is easiest to prove dual relative smoothness (or strong convexity) via these second-order conditions.

\begin{proposition}[Second-order characterizations of dual relative conditions, Legendre convex case]
\label{lemma:secondorderdualrelativecond}
Let $f : \R^d \to \R \cup \{\infty\}$ be Legendre convex, minimized at $\xmin$, and $C^2$ on $\interior (\dom f) \setminus \{\xmin\}$ such that $\det \hess f(x) \neq 0$ at $x \in \interior (\dom f) \setminus \{\xmin\}$. Let $k : \R^d \to \R \cup \{\infty\}$ be Legendre convex, $C^2$ on $\interior{(\dom f^*)} \setminus \{0\}$ such that $\det \hess k(\dualvarx) \neq 0$ at $\dualvarx \in \interior (\dom f^*) \setminus \{0\}$. Let $L, \mu \geq 0$.
\begin{enumerate}
    \item $k$ is $\dualL$-smooth relative to $f^*$ on $\interior(\dom f^*)$ if and only if,
        \[\hess f(x) \preceq \dualL [\hess k(\grad f(x))]^{-1} \qquad \forall x \in \interior(\dom f) \setminus \{\xmin\}.\]
    \item $k$ is $\dualmu$-strongly convex relative to $f^*$ on $\interior(\dom f^*)$ if and only if,
        \[\dualmu [\hess k(\grad f(x))]^{-1} \preceq \hess f(x) \qquad \forall x \in \interior(\dom f) \setminus \{\xmin\}.\]
\end{enumerate}
\end{proposition}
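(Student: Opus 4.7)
The plan is to reduce to the already proved second-order characterization of relative smoothness and strong convexity from Proposition \ref{lemma:twicediffrelativecondition}, applied to the pair $(k, f^*)$ on the open convex set $Q = \interior(\dom f^*)$ with exceptional point $z = 0$. The main bookkeeping is to verify the regularity hypotheses of Proposition \ref{lemma:twicediffrelativecondition} for $f^*$ and then to transfer the inequality from $\dualvarx$-coordinates to $x$-coordinates using the identity $\hess f^*(\grad f(x)) = [\hess f(x)]^{-1}$ of Lemma \ref{lemma:legendretwicediff}.

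First I would establish the regularity of $f^*$. Since $f$ is Legendre convex, Lemma \ref{lemma:legendretwicediff} implies that $\grad f$ is a homeomorphism from $\interior(\dom f)$ onto $\interior(\dom f^*)$. Because $f$ is minimized at $\xmin$, we have $\grad f(\xmin) = 0$, hence $\grad f$ restricts to a homeomorphism from $\interior(\dom f) \setminus \{\xmin\}$ onto $\interior(\dom f^*) \setminus \{0\}$. On the latter set, $f$ is $C^2$ with invertible Hessian, so by the inverse function theorem and \eqref{eq:conjugateinvhess}, $f^*$ is $C^2$ on $\interior(\dom f^*) \setminus \{0\}$ with
\[
\hess f^*(\grad f(x)) = [\hess f(x)]^{-1} \qquad \forall \, x \in \interior(\dom f) \setminus \{\xmin\}.
\]
The hypotheses also give $k$ is $C^2$ on $\interior(\dom f^*) \setminus \{0\}$ with invertible Hessian.

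Second, I would invoke Proposition \ref{lemma:twicediffrelativecondition} with $h = k$, $g = f^*$, $Q = \interior(\dom f^*)$, and $z = 0$, yielding that $k$ is $\dualL$-smooth relative to $f^*$ on $\interior(\dom f^*)$ if and only if
\[
\hess k(\dualvarx) \preceq \dualL \hess f^*(\dualvarx) \qquad \forall \, \dualvarx \in \interior(\dom f^*) \setminus \{0\},
\]
with the symmetric statement for $\dualmu$-strong convexity. Substituting $\dualvarx = \grad f(x)$ for $x \in \interior(\dom f) \setminus \{\xmin\}$ and applying the Hessian inversion identity above gives
\[
\hess k(\grad f(x)) \preceq \dualL [\hess f(x)]^{-1}.
\]
Both matrices are symmetric positive definite (strict convexity of $k$ and $f$ plus the non-vanishing determinant assumptions ensure positive definiteness, rather than merely positive semi-definiteness, of $\hess k(\grad f(x))$ and $\hess f(x)$), so I may invert the inequality to obtain the claimed characterization $\hess f(x) \preceq \dualL [\hess k(\grad f(x))]^{-1}$. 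The strong convexity part is entirely analogous with the order of the inequality reversed.

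The step most likely to require care is the matrix inversion of the inequality, since $A \preceq B$ and $B^{-1} \preceq A^{-1}$ are only equivalent when $A, B$ are symmetric positive definite. This is where the hypotheses $\det \hess k(\grad f(x)) \neq 0$ and $\det \hess f(x) \neq 0$ (combined with convexity) are essential, and it is also why the exceptional point must be excised: at $x = \xmin$ one may lose invertibility of $\hess f$, and correspondingly at $\dualvarx = 0$ one may lose invertibility of $\hess k$, but these single points do not affect the relative smoothness/strong convexity condition by Proposition \ref{lemma:twicediffrelativecondition}.
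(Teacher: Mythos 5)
Your argument matches the paper's proof: both apply Proposition \ref{lemma:twicediffrelativecondition} in the dual coordinates with $Q = \interior(\dom f^*)$ and $z = 0$, then transfer via the Hessian inversion identity of Lemma \ref{lemma:legendretwicediff} and invert the resulting matrix inequality using positive definiteness. The only difference is that you spell out the regularity of $f^*$ and the reason $\grad f$ restricts to a homeomorphism onto $\interior(\dom f^*)\setminus\{0\}$ slightly more explicitly than the paper does, which is fine.
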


\begin{remark}
\label{remark:diffconditions}
It is well-known that the primal and dual relative conditions are equivalent in the case of $\hess h(x) = I = \hess k(\dualvarx)$ (see, \eg{} \cite{zalinescu2002convex, shalev2007online, kakade2009duality}). In particular, if $f$ is $\mu$-strongly convex and $L$-smooth on $\interior(\dom f)$, then its convex conjugate $f^*$ is $(1/L)$-strongly convex and $(1/\mu)$-smooth on $\interior(\dom f^*)$. In fact, for twice continuously differentiable $f$, the equivalence is a simple consequence of Propositions \ref{lemma:twicediffrelativecondition} and \ref{lemma:secondorderdualrelativecond}. However, this equivalence is not true in general.

Given a Legendre convex $g : \R \to \R \cup \{\infty\}$ define the following sets of functions
\begin{align}
    \mathscr{F}_g &= \{\text{Legendre convex } f : f \text{ is smooth and strongly convex relative to } g\},\\
    \mathscr{F}_g^* &= \{\text{Legendre convex } f : g \text{ is smooth and strongly convex relative to } f^*\}.
\end{align}
Let $k(\dualvarx) = |\dualvarx|^q/q$ for $\dualvarx \in \R$ and $1 < q < 2$. A simple argument by contradiction shows that $\mathscr{F}_k^* \nsubseteq \mathscr{F}_h$ for all twice continuously differentiable $h : \R \to \R$, implying that the primal and dual relative conditions are not equivalent in general. Consider
\begin{equation}
f_b(x)=|x-b|^p/p,
\end{equation}
for $p=\frac{q}{q-1}$ and $x \in \R$. First $f_b \in \mathcal{F}_k^*$ for all $b$, which follows from $[k''(f'_b(x))]^{-1} = (p-1)|x-b|^{p-2} = f_b''(x)$ and Proposition \ref{lemma:secondorderdualrelativecond}. On the other hand, suppose there is some twice continuously differentiable $h : \R \to \R$ such that $f_b \in \mathscr{F}_h$ for all $b$. Then there exists $\mu > 0$ such that $\mu h''(b) \leq f_b''(b) = 0$ for all $b$. This implies that $h''(x) \equiv 0$ and thus $h(x) \equiv 0$. However, this leads to a contradiction, because smoothness is violated: $f_b''(b+\epsilon) > 0 = L h''(x)$ for any $L, \epsilon > 0$.
\end{remark}

\begin{proof}[Proof of Proposition \ref{lemma:secondorderdualrelativecond}]
We prove the relative smoothness result, and the relative strong convexity one follows similarly. By Lemma \ref{lemma:legendretwicediff}, if $\grad f$ is continuously differentiable for $x \in \interior(\dom f) \setminus \{\xmin\}$, then $\grad f^*$ is continuously differentiable for $\dualvarx \in \interior(\dom f^*) \setminus \{0\}$ by the inverse function theorem. Thus, by Proposition \ref{lemma:twicediffrelativecondition} dual relative smoothness is equivalent to: for all $\dualvarx \in \interior(\dom f^*) \setminus \{0\}$,
\begin{align}
    \label{intermediate_relsmoothdual}\hess k(\dualvarx) \preceq \dualL \hess f^*(\dualvarx).
\end{align}
By Lemma \ref{lemma:legendretwicediff}, \eqref{intermediate_relsmoothdual} is equivalent to for all $x \in \interior(\dom f) \setminus \{\xmin\}$,
\begin{align}
    \hess k(\grad f(x)) \preceq \dualL [\hess f(x)]^{-1}.
\end{align}
Since $A^{-1} \preceq B^{-1}$ is equivalent to $B \preceq A$ for positive definite matrices, we are done.
\end{proof}

A major difference between the primal and dual relative conditions is the fact that dual relative conditions are invariant under horizontal translations of $f$. To see why, let $k$ be $\dualL$-smooth relative to $f^*$ on a convex set $Q$. Define $g(x) = f(x  - z)$ for $z \in \R^d$. Then, by \cite[Thm. 12.3]{rockafellar1970convex}, $g^*(\dualvarx) = f^*(\dualvarx) + \inner{z}{\dualvarx}$. Bregman divergences of functions that differ only in affine terms are identical (see \cite{banerjee2005clustering} for the differentiable case), so we have for all $\dualvarx, \dualvary \in Q$, $D_k(\dualvarx, \dualvary) \leq L^* D_{f^*}(\dualvarx, \dualvary) = L^* D_{g^*}(\dualvarx, \dualvary)$.
Thus $k$ is $L^*$-smooth relative to $g^*$ on $Q$. Invariance under horizontal translation is clearly easy to violate in the case of primal relative smoothness.

Even if $h$ is allowed to translate with $f$, the primal and dual relative conditions can lead to distinct conditioning. Given a positive definite $A \succ 0$, let
\begin{equation}
\begin{aligned}
    f(x) = \norm{Ax - b}^p/p, \qquad h(x) = \norm{x - A^{-1}b}^p/p, \qquad k(\dualvarx) &= \norm{\dualvarx}^q/q,
\end{aligned}
\end{equation}
for $1/p + 1/q = 1$ and $p > 2$. It can be shown that $f$ satisfies both the dual (with respect to $k$) and primal (with respect to $h$) relative conditions. Nonetheless, the condition numbers are distinct. A simple calculation reveals that in this case
\begin{equation}
\begin{aligned}
     \frac{L}{\mu} &= p^2 \l(\frac{\sigma_{\max}(A)}{\sigma_{\min}(A)}\r)^{p} \qquad \text{vs.} \qquad \frac{L^*}{\mu^*} &= (p-1)^{2}\l(\frac{\sigma_{\max}(A)}{\sigma_{\min}(A)}\r)^{4-q},
\end{aligned}
\end{equation}
where $\sigma_{\min}$ and $\sigma_{\max}$ are the smallest and largest singular values of $A$, respectively. Thus, the primal condition number is larger than the dual number (since $4-q=3-(p-1)^{-1} < p$ when $p > 2$). Similarly, the example $f(x)=\|Ax-b\|_{4}^4/4+\|Cx-d\|_2^2/2$ of \cite[p. 339]{lu2018relatively} can be shown to have better conditioning under the dual preconditioned method than under the Bregman gradient method.

\subsection{Dual relative conditions for essentially smooth objectives}
We now show that the smoothness of $k$ relative to $f^*$ on $\dom f^*$ is a sufficient condition for Assumption \ref{assumption:descent}.\ref{assumption:keyassumption}. We also provide a sufficient, second-order condition.
\begin{proposition}
\label{lem:equivalentcond}
Let $f, k: \R^d \to \R \cup \{\infty\}$ be essentially smooth convex functions. If $k$ is $\dualL$-smooth relative to $f^*$ on $\dom f^*$, then  $\grad f(\interior( \dom f)) \subseteq \interior(\dom k)$ and for all $x,y \in \interior (\dom f)$,
    \begin{equation}
        D_k(\grad f(y), \grad f(x)) \leq \dualL D_f(x,y).
    \end{equation}
\end{proposition}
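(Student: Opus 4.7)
The plan is to chain two earlier results. First, I would invoke Proposition \ref{prop:relativeconditions} with $h=k$, $g=f^*$, and $Q=\dom f^*$ to translate the hypothesis that $k$ is $\dualL$-smooth relative to $f^*$ on $\dom f^*$ into the equivalent first-order bound $D_k(\dualvary,\dualvarx)\le\dualL\,D_{f^*}(\dualvary,\dualvarx)$ for all $\dualvarx,\dualvary\in\dom f^*$. The hypothesis also implicitly requires $\dom f^*\subseteq\dom k$, since $\dualL f^*-k$ must be finite on $\dom f^*$ for its extension by $+\infty$ off $\dom f^*$ to be a proper convex function on $\R^d$.

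Next, for $x,y\in\interior(\dom f)$, essential smoothness of $f$ together with \cite[Cor. 26.4.1]{rockafellar1970convex} places $\grad f(x),\grad f(y)\in\dom\partial f^*\subseteq\dom f^*$, so the first-order bound applies with $\dualvarx=\grad f(x)$ and $\dualvary=\grad f(y)$. Chaining with Lemma \ref{lemma:dualdivergence}, which supplies $D_{f^*}(\grad f(y),\grad f(x))\le D_f(x,y)$ in the essentially smooth (non-Legendre) setting as an inequality rather than an equality, yields the claimed Bregman bound. This mirrors the corresponding implication in the Legendre version (Proposition \ref{lem:equivalentcondLegendre}), with the former equality weakened to an inequality.

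The main obstacle is the domain inclusion $\grad f(\interior(\dom f))\subseteq\interior(\dom k)$. The weak inclusion $\grad f(\interior(\dom f))=\dom\partial f^*\subseteq\dom f^*\subseteq\dom k$ is immediate, and $\interior(\dom f^*)\subseteq\interior(\dom k)$ holds because $\interior(\dom f^*)$ is an open subset of $\dom k$. Unlike the Legendre case, however, $\dom\partial f^*$ is not generally open and may include boundary points of $\dom f^*$, so this sub-inclusion alone is not enough. To close the remaining gap, I would argue by contradiction: if $y_0=\grad f(x_0)$ lay on $\partial(\dom k)$, essential smoothness of $k$ would force $k'(y_0;v)=-\infty$ along every direction $v$ pointing into $\interior(\dom k)$, making $D_k(y,y_0)=+\infty$ for suitable nearby $y\in\interior(\dom k)$. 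Meanwhile the subgradient $x_0\in\partial f^*(y_0)$ keeps $D_{f^*}(y,y_0)$ finite, contradicting the first-order bound for any approximating sequence $y\in\dom f^*\cap\interior(\dom k)$ tending to $y_0$. Producing such a sequence inside $\dom f^*$ near $y_0$ is the technical heart of this step, and is what distinguishes the general essentially smooth case from the cleaner Legendre case.
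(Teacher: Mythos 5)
Your plan for the Bregman inequality itself is correct and matches the paper: chain Proposition~\ref{prop:relativeconditions} (applied to $h=k$, $g=f^*$, $Q=\dom f^*$) with Lemma~\ref{lemma:dualdivergence}, placing $\grad f(x),\grad f(y)$ in $\dom\partial f^*\subseteq\dom f^*$ via \cite[Cor.~26.4.1]{rockafellar1970convex}. You also correctly identify that the real content of the proposition is the domain inclusion $\grad f(\interior(\dom f))\subseteq\interior(\dom k)$, and that the key ingredient there is essential smoothness of $k$ forcing directional derivatives to diverge to $-\infty$ at boundary points.

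However, the way you try to close the domain-inclusion argument leaves a genuine gap, which you yourself flag. You want to invoke the first-order bound $D_k(\dualvary,\dualvarx)\le\dualL D_{f^*}(\dualvary,\dualvarx)$ from Proposition~\ref{prop:relativeconditions}, but that bound is only available for pairs in $Q=\dom f^*$. So you need a point $\dualvary\in\dom f^*$ with the open segment $(\dualvarx,\dualvary]$ dipping into $\interior(\dom k)$, and nothing in the hypotheses hands you such a point: $\dom\partial f^*$ need not be open, $\interior(\dom f^*)$ could a priori be empty, and $\dom f^*$ could in principle cling to $\partial(\dom k)$ near $\dualvarx$. The paper sidesteps exactly this obstacle by not using the first-order characterization at this stage. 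It instead picks an arbitrary $\dualvary\in\interior(\dom k)$ (with no requirement $\dualvary\in\dom f^*$) and restricts the \emph{zeroth-order} object $d_{\dualL}=\dualL f^*-k$, which by relative smoothness is a convex function on all of $\R^d$ (with $+\infty$ off $\dom f^*$), to the segment $[\dualvarx,\dualvary]$, getting a one-variable convex function $h(\lambda)$. Lemma~\ref{lem:convexfromaboveright} then caps $\limsup_{\lambda\downarrow 0}h'_+(\lambda)$, and since \cite[Lem.~26.2]{rockafellar1970convex} gives $k'(\dualvarx+\lambda(\dualvary-\dualvarx);\dualvary-\dualvarx)\downarrow-\infty$, the decomposition $h'_+ = \dualL (f^*)'-k'$ forces the one-sided derivative $r'_+(\lambda)$ of the restriction of $f^*$ to blow up to $+\infty$ as $\lambda\downarrow 0$, which Lemma~\ref{lem:convexfromaboveright}'s inequality \eqref{eq:oneddirtangent} rules out for a finite convex $f^*$ on $\dom f^*$. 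In short, the maneuver you are missing is to drop to the one-dimensional restriction of the \emph{difference} function $d_{\dualL}$, where convexity and the one-sided-derivative lemmas produce the contradiction without ever needing a nearby test point of $\dom f^*$.
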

\begin{proof}
Note that by \cite[Cor. 26.4.1]{rockafellar1970convex}, $\grad f(\interior(\dom f))  = \dom \partial f^* \subseteq \dom f^*$. We have $\dom f^*\subseteq \dom k$ by the definition of relative smoothness. By Proposition \ref{prop:relativeconditions} and Lemma \ref{lemma:dualdivergence}, for every $x,y\in \interior(\dom f)$ we have
\begin{equation}
    \begin{aligned}
        D_{k}(\grad f(y), \grad f(x)) \leq \dualL D_{f^*}(\grad f(y),\grad f(x))\leq \dualL D_f(x,y)
    \end{aligned}
\end{equation}
Now, we are going to show that $\grad f(\interior (\dom f)) \subseteq \interior(\dom k)$.
We argue by contradiction. Suppose there is a $\dualvarx \in \grad f(\interior (\dom f))$ such that $\dualvarx \notin \interior (\dom k)$.
So $\dualvarx$ has to be in $\dom k \setminus \interior(\dom k)$, \ie{} on the boundary of $\interior(\dom k)$. Using the essential smoothness of $k$, by \cite[Lemma 26.2]{rockafellar1970convex} it follows that for any $\dualvary \in \interior (\dom k)$, we have
\begin{equation}\label{eq:kdirderlimit}
k'(\dualvarx+\lambda(\dualvary-\dualvarx);\dualvary-\dualvarx)\downarrow -\infty\text{ as } \lambda\downarrow 0.
\end{equation}
We fix an arbitrary $\dualvary\in \interior (\dom k)$, and define the function $h:[0,1]\to \R$ as
$h(\lambda)=L^* f^*(\dualvarx+\lambda(\dualvary-\dualvarx))-k(\dualvarx+\lambda(\dualvary-\dualvarx))$. Then relative smoothness implies that $h$ is a finite, continuous convex function on $[0,1]$. However, for such a function we must have $\limsup_{\lambda\downarrow 0}h_+'(\lambda)<\infty$, since otherwise it could not be finite on $[0,1]$ by Lemma \ref{lem:convexfromaboveright}. By combining this with \eqref{eq:kdirderlimit}, it follows that
\[{f^*}'(\dualvarx+\lambda(\dualvary-\dualvarx);\dualvary-\dualvarx)\to \infty\text{ as } \lambda\downarrow 0.\]
Let $r:[0,1]\to \R$ be $r(\lambda)=f^*(\dualvarx+\lambda(\dualvary-\dualvarx))$, then this implies that $r'_+(\lambda)\to \infty$ as $\lambda\downarrow 0$, and by \eqref{eq:oneddirtangent} of Lemma \ref{lem:convexfromaboveright},
this contradicts the assumption that $f^*$ is finite in $\dom f^*$. Hence we must have $x^*\in \interior(\dom k)$, and $\grad f(\interior (\dom f)) \subseteq \interior(\dom k)$.
\end{proof}
The next proposition gives a second-order sufficient condition for Assumption \ref{assumption:descent}.\ref{assumption:keyassumption}.
\begin{proposition}
\label{lemma:secondorderdualrelativecondnonLegendre}
Let $f : \R^d \to \R \cup \{\infty\}$ be essentially smooth, and $C^2$ on $\interior (\dom f)$. Let $k : \R^d \to \R \cup \{\infty\}$ be Legendre convex, and $C^2$ on $\interior(\dom k)$. If $\grad f(\interior (\dom f))\subseteq \interior(\dom k)$,
$\det \grad^2 k(\dualvarx) \neq 0$ for all $\dualvarx\in \interior(\dom k)$, and
\begin{equation}
    \hess f(x) \preceq \dualL [\hess k(\grad f(x))]^{-1} \qquad \forall x \in \interior(\dom f),
\end{equation}
then $D_k(\grad f(y), \grad f(x)) \le L^* D_f(x,y)$ for every $x,y\in \interior(\dom f)$.
\end{proposition}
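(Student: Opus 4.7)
The plan is to promote the pointwise Hessian inequality into the desired Bregman inequality via direct integration along the primal line segment from $y$ to $x$. Fix $x, y \in \interior(\dom f)$ and set $\gamma_t = (1-t)y + tx$ for $t \in [0,1]$. By convexity of $\interior(\dom f)$ and the inclusion assumption, $\gamma_t \in \interior(\dom f)$ and $\alpha(t) := \grad f(\gamma_t) \in \interior(\dom k)$ for every $t$; both curves are $C^1$ by the assumed $C^2$ regularity of $f$ and $k$.

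I will work with the scalar function
\[ \varphi(t) = L^* D_f(\gamma_t, y) - D_k\!\bigl(\grad f(y),\, \grad f(\gamma_t)\bigr), \qquad t \in [0,1], \]
so that $\varphi(0) = 0$ and the conclusion $D_k(\grad f(y), \grad f(x)) \leq L^* D_f(x, y)$ is equivalent to $\varphi(1) \geq 0$. A routine chain-rule differentiation (with the cancellations standard in Bregman expressions) gives
\[ \varphi'(t) = L^* \inner{\alpha(t) - \alpha(0)}{x-y} - \inner{\hess k(\alpha(t))\,\hess f(\gamma_t)(x-y)}{\alpha(t) - \alpha(0)}, \]
and in particular $\varphi'(0) = 0$. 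Writing $z := x - y$, $H_s := \hess f(\gamma_s)$, $K_s := \hess k(\alpha(s))$, and expanding $\alpha(t) - \alpha(0) = \int_0^t H_s z \, ds$ by the fundamental theorem, this rearranges to
\[ \varphi'(t) = \int_0^t \bigl[L^* \inner{H_s z}{z} - \inner{K_t H_t z}{H_s z}\bigr]\, ds. \]

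The hypothesis is equivalent to the matrix inequality $K_t^{1/2} H_t K_t^{1/2} \preceq L^* I$, which in turn gives $H_t K_t H_t \preceq L^* H_t$ and controls the integrand precisely at the diagonal $s = t$. For $s \neq t$ I plan to apply the Cauchy--Schwarz inequality in the $K_t$-weighted inner product,
\[ \inner{K_t H_t z}{H_s z}^2 \leq \inner{H_t K_t H_t z}{z}\, \inner{H_s K_t H_s z}{z}, \]
and combine the diagonal bound on the first factor with a symmetrization $s \leftrightarrow t$ over the triangular region $\{0 \leq s \leq t \leq 1\}$ so that the mixed quantities $\inner{H_s K_t H_s z}{z}$ are paired with their reflected counterparts and reduced to available diagonal estimates.

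The principal obstacle is precisely this mixed term $\inner{H_s K_t H_s z}{z}$ for $s \neq t$: the hypothesis only constrains $H \cdot K \cdot H$ when the two Hessians are evaluated at a common argument, so there is no a priori control of $K_t$ against $H_s$. If a direct symmetrization does not fully absorb these cross-terms, a clean fallback is to reduce to the Legendre case by approximation: replace $f$ by $f_\epsilon(x) = f(x) + \tfrac{\epsilon}{2}\norm{x}^2$, which preserves essential smoothness and makes $f_\epsilon$ strictly convex (hence Legendre); a perturbation argument shows the Hessian hypothesis continues to hold for $f_\epsilon$ with a constant $L^*_\epsilon \to L^*$, so Proposition~\ref{lemma:secondorderdualrelativecond} followed by Proposition~\ref{lem:equivalentcond} applies to $(f_\epsilon, k)$, and the result for $(f, k)$ follows by passage to the limit $\epsilon \downarrow 0$ using continuity of Bregman divergences on compact subsets of $\interior(\dom f)$.
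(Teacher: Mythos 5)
Your primary strategy (integrate $\varphi'(t)$ along the primal segment and absorb the cross-terms by a weighted Cauchy--Schwarz plus symmetrization) runs into exactly the obstruction you flag: the hypothesis controls $H_t K_t H_t$ but gives nothing about $H_s K_t H_s$ for $s \ne t$. The symmetrization over $\{0 \le s \le t \le 1\}$ does not make the integrand symmetric (the indicator, the factor $K_t$, and the asymmetric placement of $H_t$ vs.\ $H_s$ all break symmetry), so the off-diagonal contribution is not obviously sign-controlled and the argument as sketched does not close.

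Your fallback is the right general strategy and is close in spirit to the paper's proof, but as stated it has a real gap. You write that ``a perturbation argument shows the Hessian hypothesis continues to hold for $f_\epsilon$ with a constant $L^*_\epsilon \to L^*$'' and then invoke the \emph{global} Propositions~\ref{lemma:secondorderdualrelativecond} and~\ref{lem:equivalentcond} for $(f_\epsilon, k)$. That global claim is unjustified and is false in general: with $f_\epsilon = f + \tfrac{\epsilon}{2}\|\cdot\|^2$ the gradient is shifted by $\epsilon z$, which is unbounded over $\interior(\dom f)$ when that set is unbounded, so $\hess k$ gets evaluated at points $\grad f_\epsilon(z)$ that need not be uniformly close to $\grad f(z)$; and absorbing the extra $\epsilon I$ into $(L^*_\epsilon - L^*)[\hess k(\cdot)]^{-1}$ requires a uniform positive lower bound on $[\hess k]^{-1}$ over the whole image of $\grad f_\epsilon$, which you do not have. (There is also a smaller mismatch: Proposition~\ref{lemma:secondorderdualrelativecond} gives smoothness on $\interior(\dom f_\epsilon^*)$ while Proposition~\ref{lem:equivalentcond} asks for it on $\dom f_\epsilon^*$; for Legendre $f_\epsilon$ the correct bridge is Proposition~\ref{lem:equivalentcondLegendre}.) The paper sidesteps all of this by \emph{localizing}: fixing $x,y$, it only establishes convexity of $(L^*+\delta) f_\epsilon^* - k$ on the single compact segment $I_\epsilon = [\grad f_\epsilon(x), \grad f_\epsilon(y)]$, so only the Hessian inequality on a compact neighborhood $W$ with $\ol{W} \subset \interior(\dom k)$ is needed, and on $\ol{W}$ both the $\epsilon I$ term and the shift of $\hess k$'s argument are controlled by uniform continuity (Heine--Cantor). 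The paper also uses the regularizer $\epsilon\sqrt{1+\|\cdot\|^2}$ so that $\|\grad f_\epsilon - \grad f\|\le \epsilon$ everywhere, avoiding any bookkeeping about how far $(\grad f_\epsilon)^{-1}(I_\epsilon)$ wanders. Repairing your fallback requires exactly this localization; the global route does not go through.
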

\begin{proof}
Let $x,y\in  \interior(\dom f)$, and let  $W \subseteq \R^d$ be a bounded open neighborhood of the segment $I=[\grad f(x), \grad f(y)]=\{t\grad f(x)+(1-t) \grad f(y): 0\le t\le 1\}$ such that $\mathrm{cl}(W) \subseteq  \interior(\dom k)$.
Let $\delta > 0$.
Let $\epsilon > 0$, and let $f_{\epsilon} : \R^d \to \R \cup \{\infty\}$, $f_{\epsilon}(z) = f(z) + \epsilon \sqrt{1+\|z\|^2}$.
Then $f_{\epsilon}$ is a Legendre convex function, and $\dom(f_{\epsilon}) = \dom(f)$.
We have $\grad f_{\epsilon}(z) = \grad f(z) + \epsilon (1+\|z\|^2)^{-\frac{1}{2}} z$ and $\grad^2 f_{\epsilon}(z) = \grad^2 f(z) + \epsilon (1+\|z\|^2)^{-\frac{3}{2}} ((1+\|z\|^2) I_d - z^T z) \succ 0$ for every $z \in \interior(\dom f)$.
So $\|\grad f_{\epsilon} (z) - \grad f(z)\| \le \epsilon$ and $0 \preceq \grad^2 f_{\epsilon}(z) - \grad^2 f(z) \preceq \epsilon I_d$ for every $z \in \interior(\dom f)$.
Let $I_{\epsilon}$ denote the segment $[\grad f_{\epsilon}(x), \grad f_{\epsilon}(y)] \subseteq \R^d$.
Choose $\epsilon$ small enough so that
\begin{enumerate}
\item the $2\epsilon$-neighborhood of $I_{\epsilon}$ is in $W$ (so $\mathrm{dist}(I_{\epsilon}, \R^d \setminus W) \ge 2\epsilon$).
\item  $\epsilon I_d \preceq \frac{\delta}{2} [\grad^2 k(w)]^{-1}$ for every $w \in W$.
\item  $\forall w_1, w_2 \in W$ s.t.\, $\|w_1-w_2\| \le \epsilon$, $[\grad^2 k(w_1)]^{-1} \preceq (1+\frac{\delta}{2 L^*}) [\grad^2 k(w_2)]^{-1}$  (uniform continuity of $(\grad^2 k)^{-1}$ on compact set $\mathrm{cl}(W)$ by Heine-Cantor theorem).
\end{enumerate}
We will show that $(L^*+\delta) f_{\epsilon}^* - k$ is convex when restricted to the segment $I_{\epsilon}$. Let $w \in I_{\epsilon}$ and $z = (\grad f_{\epsilon})^{-1}(w) \in \interior(\dom f)$.
Then $\grad f_{\epsilon}(z) = w$, and since $\|\grad f(z) - w\| \le \epsilon$, we get $\grad f(z) \in W$.
We have $\grad^2((L^*+\delta) f_{\epsilon}^* - k)(w) = (L^*+\delta) [\grad^2 f_{\epsilon}(z)]^{-1} - \grad^2 k(\grad f_{\epsilon}(z))$, and we would like to show that this is $\succeq 0$.
So we want to show $\grad^2 f_{\epsilon}(z) \preceq (L^*+\delta) [\grad^2 k(\grad f_{\epsilon}(z))]^{-1}$.
This follows from $\grad^2 f_{\epsilon}(z) \preceq \grad^2 f(z) + \epsilon I_d$ and $\epsilon I_d \preceq \frac{\delta}{2} [\grad^2 k(\grad f_{\epsilon}(z))]^{-1}$ and $\grad^2 f(z) \preceq L^* [\grad^2 k(\grad f(z))]^{-1} \preceq (L^* + \frac{\delta}{2}) [\grad^2 k(\grad f_{\epsilon}(z))]^{-1}$.
So for small enough $\epsilon$'s $(L^*+\delta) f_{\epsilon}^* - k$ is indeed convex when restricted to $I_{\epsilon}$.
Then $D_k(\grad f_{\epsilon}(y), \grad f_{\epsilon}(x)) \le (L^*+
\delta)D_{f^*_{\epsilon}}(\grad f_{\epsilon}(y),\grad f_{\epsilon}(x)) =(L^*+
\delta)D_{f_{\epsilon}}(x,y)$, using the convexity of $(L^*+\delta) f_{\epsilon}^* - k$ on $I_{\epsilon}$ combined with the same limiting argument as in \eqref{eq:dLconvexityargument}, and Lemma \ref{lemma:dualdivergence}. Taking $\epsilon \downarrow 0$, and then $\delta\downarrow 0$ we get $D_k(\grad f(y), \grad f(x)) \le L^* D_f(x,y)$.
\end{proof}

\subsection{Convergence rates for dual space preconditioned gradient descent}

In this section we show that Assumption \ref{assumption:descent} is sufficient to provide convergence rates for Algorithm \ref{alg:dualspaceprecon} on essentially smooth convex $f$. We find that $k(\grad f(x_i))) - k(0)$ converges with rate $\mathcal{O}(i^{-1})$. Under an additional dual relative strong convexity condition, we find that $f(x_i) - f(\xmin)$ converges with rate $\mathcal{O}((1 - \dualmu/\dualL)^i)$. We begin with the following descent lemma.

\begin{lemma}[Descent lemma]
\label{lemma:descent}
Let $f,k : \R^d \to \R \cup \{\infty\}$ satisfy Assumption \ref{assumption:descent}. If $x_0 \in \interior(\dom f)$, then for all $i > 0$, the iterates $x_i$ of Algorithm \ref{alg:dualspaceprecon} are such that $x_i \in \interior(\dom f)$ and for all $x \in \interior(\dom f)$,
\begin{equation}
    \label{eq:descentlemma} k(\grad f(x_i)) \leq k(\grad f(x)) - D_k(\grad f(x), \grad f(x_{i-1})) +  \dualL D_{f}(x_{i-1}, x) -  \dualL D_{f}(x_{i}, x).
\end{equation}
\end{lemma}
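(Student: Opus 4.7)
The plan is to argue by induction on $i$, with each inductive step simultaneously verifying $x_i \in \interior(\dom f)$ (so that $y_i := \grad f(x_i)$ and all Bregman quantities in the conclusion are well-defined and finite) and establishing the displayed inequality. The base case holds by hypothesis, since $x_0 \in \interior(\dom f)$ together with Assumption \ref{assumption:descent}.\ref{assumption:keyassumption} places $y_0 \in \interior(\dom k)$, and Legendre convexity of $k$ then makes $\grad k(y_0)$ well-defined.

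For the inequality itself, assuming $x_{i-1}, x_i \in \interior(\dom f)$, I would first invoke Assumption \ref{assumption:descent}.\ref{assumption:keyassumption} at the pair $(x_{i-1}, x_i)$, expand the definition of $D_k$, and substitute the algorithm's identity $\grad k(y_{i-1}) = \dualL (x_{i-1} - x_i)$, producing an upper bound on $k(y_i)$ in terms of $k(y_{i-1})$, an inner product, and $\dualL D_f(x_{i-1}, x_i)$. Then I would invoke the three-point equality for $D_f$, which holds as an equality because $f$ is differentiable on $\interior(\dom f)$:
\begin{equation*}
\dualL D_f(x_{i-1}, x_i) = \dualL D_f(x_{i-1}, x) - \dualL D_f(x_i, x) - \dualL \inner{\grad f(x_i) - \grad f(x)}{x_{i-1} - x_i}.
\end{equation*}
Rewriting the rightmost inner product via $\grad k(y_{i-1}) = \dualL(x_{i-1} - x_i)$ and cancelling with the earlier one leaves $\inner{\grad k(y_{i-1})}{\grad f(x) - y_{i-1}}$, which together with $k(y_{i-1})$ equals $k(\grad f(x)) - D_k(\grad f(x), y_{i-1})$ by the definition of $D_k$. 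Collecting the remaining terms gives the displayed bound.

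The main obstacle will be showing $x_i \in \interior(\dom f)$, since the update is explicit and could in principle overshoot $\dom f$. I would argue along the segment $\phi(s) = x_{i-1} - (s/\dualL)\grad k(y_{i-1})$, $s \in [0,1]$: let $\tau = \sup\{s \in [0,1] : \phi(t) \in \interior(\dom f) \text{ for all } t \in [0,s]\}$ and suppose for contradiction that $\phi(\tau)$ lies on the boundary of $\dom f$, so that essential smoothness of $f$ forces $\|\grad f(\phi(s))\| \to \infty$ as $s \uparrow \tau$. Applying Assumption \ref{assumption:descent}.\ref{assumption:keyassumption} at each pair $(x_{i-1}, \phi(s))$ with $s < \tau$ and combining with the convexity bound $f(\phi(s)) \geq f(x_{i-1}) + \inner{y_{i-1}}{\phi(s) - x_{i-1}}$, a short calculation yields $g_s(y(s)) \leq g_s(y_{i-1})$, where $g_s(y) := k(y) - (1-s)\inner{\grad k(y_{i-1})}{y}$ and $y(s) := \grad f(\phi(s))$, and where the right-hand side is continuous in $s$ on $[0,1]$ and hence uniformly bounded. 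Each $g_s$ is Legendre convex (a linear perturbation of $k$) with unique minimum in $\interior(\dom k)$, so by Lemma \ref{lemma:radiallyunbounded} applied to its recentering it has bounded sublevel sets. Passing to a subsequence $s_n \uparrow \tau$ with $u_n := y(s_n)/\|y(s_n)\| \to u$, the convexity of $g_{s_n}$ along the chord from $0$ to $y(s_n)$, together with lower semi-continuity of $k$, yields $g_{\tau}(tu) \leq g_{\tau}(0)$ for all $t > 0$, contradicting the boundedness of this sublevel set. Hence $\tau = 1$ and $x_i = \phi(1) \in \interior(\dom f)$, completing the induction.
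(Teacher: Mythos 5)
Your proof is correct and follows the same overall architecture as the paper's: an induction, an algebraic derivation of the inequality from Assumption \ref{assumption:descent}.\ref{assumption:keyassumption} applied to the pair $(x_{i-1},x_i)$, the three-point identity for $D_f$, and the definition of $D_k$; and a separate boundary-crossing argument that pits essential smoothness of $f$ (which forces $\norm{\grad f} \to \infty$ at the boundary) against the boundedness of $\{\dualvarx : k(\dualvarx) \le c\}$ (Lemma \ref{lemma:radiallyunbounded}). The algebraic derivation matches the paper's essentially term-for-term.

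The place where you diverge is the interior-membership step, and there you do substantially more work than is needed. The paper parameterizes the ray by $x_{\lambda} = x_{i-1} - \lambda^{-1}\grad k(\grad f(x_{i-1}))$, runs the full descent derivation with a general $\lambda \ge \dualL$, and then substitutes $x = x_{i-1}$ to get \eqref{eq:directdescent}: $k(\grad f(x_{\lambda})) + \lambda D_f(x_{\lambda}, x_{i-1}) \le k(\grad f(x_{i-1}))$, hence $k(\grad f(x_{\lambda})) \le k(\grad f(x_{i-1}))$ with a bound that does not depend on $\lambda$. The contradiction is then immediate from radial unboundedness of the fixed function $k$. Your route instead combines the assumption at $(x_{i-1}, \phi(s))$ with the first-order convexity lower bound on $f(\phi(s))$, producing the $s$-dependent inequality $g_s(y(s)) \le g_s(y_{i-1})$ with $g_s(\dualvarx) = k(\dualvarx) - (1-s)\inner{\grad k(y_{i-1})}{\dualvarx}$, which then forces the limiting argument (subsequence of unit vectors, chord convexity of $g_{s_n}$, lower semicontinuity of $k$, radial unboundedness of $g_{\tau}$). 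This does work — each step checks out, including the existence of a minimizer of $g_{\tau}$ from $(1-\tau)\grad k(y_{i-1}) \in \interior(\dom k^*)$ — but it is avoidable: your own inequality $g_s(y(s)) \le g_s(y_{i-1})$ combined with $D_k(y(s), y_{i-1}) \ge 0$ already gives $\inner{\grad k(y_{i-1})}{y(s) - y_{i-1}} \le 0$ for $s \in (0,1]$, hence $k(y(s)) \le k(y_{i-1})$, after which radial unboundedness of $k$ alone closes the argument with no passage to the limit. In short: same strategy, same key tools, but the boundary sub-argument is more elaborate than the paper's (or than needed), because the weaker $s$-dependent bound you derive obscures the simple monotonicity $k(\grad f(\phi(s))) \le k(\grad f(x_{i-1}))$ that the paper exploits directly.
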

\begin{proof}
    Let $C = \interior(\dom f)$. We proceed by induction. For $i = 0$ we have $x_0 \in C$ by assumption. Now, for $i > 0$, assume the induction hypothesis for $x_{i-1}$. Define,
    \begin{equation}
        x_{\lambda} = x_{i-1} - \frac{1}{\lambda} \grad k(\grad f(x_{i-1}))
    \end{equation}
    for $\lambda > 0$. Because $x_{i-1} \in \interior(\dom f) \neq \emptyset$, the set $S = \{\lambda \geq \dualL : x_{\lambda} \in C\}$ is not empty.
    Let $x \in C$. Let $\dualvarx = \grad f(x)$, $\dualvarx_{i-1} = \grad f(x_{i-1})$ and $\dualvarx_{\lambda} = \grad f(x_{\lambda})$ for $\lambda \in S$.
The following identities follow by our definition of $x_{\lambda}$ and some algebra.
            \begin{align}
            \label{eq:statscal}
                \inner{\grad k(\dualvarx_{i-1})}{\dualvarx - \dualvarx_{\lambda}} &= \lambda\inner{x_{i-1} -  x_{\lambda}}{\dualvarx - \dualvarx_{\lambda}},\\
            \label{eq:threepointk}
            \inner{x_{i-1} -  x_{\lambda}}{\grad f(x) - \grad f(x_{\lambda})} &= D_{f}(x_{\lambda}, x) + D_{f}(x_{i-1}, x_{\lambda})- D_{f}(x_{i-1}, x).
            \end{align}
Combining \eqref{eq:statscal} and \eqref{eq:threepointk}, we get
            \begin{equation}
            \label{eq:kdescentid}
            \begin{aligned}
                \lambda D_{f}(x_{i-1}, x_{\lambda})& + \inner{\grad k(\dualvarx_{i-1})}{\dualvarx_{\lambda} - \dualvarx_{i-1}}=\\
                &\lambda D_{f}(x_{i-1}, x)  - \lambda D_{f}(x_{\lambda}, x) + \inner{\grad k(\dualvarx_{i-1})}{\dualvarx - \dualvarx_{i-1}}
            \end{aligned}
            \end{equation}
Putting everything together, we have
    \begin{align}
        k(\dualvarx_{\lambda}) &= k(\dualvarx_{i-1}) + \inner{\grad k(\dualvarx_{i-1})}{\dualvarx_{\lambda} - \dualvarx_{i-1}} + D_{k}(\dualvarx_{\lambda},\dualvarx_{i-1}) \nonumber\\
        &\overset{(a)}{\leq} k(\dualvarx_{i-1}) + \inner{\grad k(\dualvarx_{i-1})}{\dualvarx_{\lambda} - \dualvarx_{i-1}} + \dualL D_{f}(x_{i-1}, x_{\lambda}) \nonumber \\
        &\overset{(b)}{\leq} k(\dualvarx_{i-1}) + \inner{\grad k(\dualvarx_{i-1})}{\dualvarx_{\lambda} - \dualvarx_{i-1}} + \lambda D_{f}(x_{i-1}, x_{\lambda}) \nonumber \\
        &\overset{(c)}{=} k(\dualvarx_{i-1}) + \inner{\grad k(\dualvarx_{i-1})}{\dualvarx-\dualvarx_{i-1}} + \lambda D_{f}(x_{i-1}, x) - \lambda D_{f}(x_{\lambda}, x) \nonumber\\
        \label{eq:descentineq} &\overset{(d)}{\leq} k(\dualvarx) - D_k(\dualvarx, \dualvarx_{i-1}) +  \lambda D_{f}(x_{i-1}, x) - \lambda D_{f}(x_{\lambda}, x).
    \end{align}
    $(a)$ follows from $\dualL$-smoothness, $(b)$ from $\dualL \leq \lambda$ and the non-negativity of the Bregman divergence, $(c)$ from \eqref{eq:kdescentid}, and $(d)$ by definition and simple algebra. Taking $x = x_{i-1}$ and recalling the definition of $\dualvarx_{i-1}$ and $\dualvarx_{\lambda}$ reveals that
    \begin{equation}
        \label{eq:directdescent} k(\grad f(x_{\lambda})) + \lambda D_{f}(x_{\lambda}, x_{i-1}) \leq k(\grad f(x_{i-1})).
    \end{equation}

Now, our goal is to show that $x_{i} = x_{\dualL} \in \interior(\dom f)$ by showing that $\dualL \in S$. We proceed by contradiction, so suppose $\dualL \notin S$. Then $x_{\dualL} \in \R^d \setminus \interior(\dom f)$. Hence we can find $\Lambda \geq \dualL$ such that $x_{\Lambda} \in \partial (\dom f)$. Now take a sequence $\lambda_j \to \Lambda$ such that $\lambda_j > \Lambda$. By the above discussion for all $j \geq 0$ we have $k(\grad f(x_{\lambda_j})) \le k(\grad f(x_{i-1}))$. $k$ being minimized at $0$ means it satisfied Lemma \ref{lemma:radiallyunbounded} and thus is radially unbounded. This implies that $\norm{\grad f(x_{\lambda_j})} \leq c$ for some $c > 0$ and all $j \geq 0$. But this contradicts the requirement that $\norm{\grad f(x_{\lambda_j})} \to \infty$ since $x_{\lambda_j} \to x_{\Lambda} \in \partial(\dom f)$ from the assumption of essential smoothness. This completes the proof that $x_i = x_{\dualL} \in \interior(\dom f)$. Since $\dualL \in S$, \eqref{eq:descentineq} ensures that \eqref{eq:descentlemma} holds.
\end{proof}

We can now provide convergence rates for our method.

\begin{theorem}
\label{thm:convergence}
Let $f,k : \R^d \to \R \cup \{\infty\}$ satisfy Assumption \ref{assumption:descent}. If $x_0 \in \interior(\dom f)$, then for all $i > 0$ the iterates of Algorithm \ref{alg:dualspaceprecon} satisfy
\begin{equation}
    \label{eq:dualgradientrate} k(\grad f(x_i)) - k(0) \leq \frac{\dualL}{i} (f(x_0) - f(\xmin)).
\end{equation}
 In particular, $\grad f(x_i) \to 0$.
 If additionally $f$ is Legendre convex and there exists $\dualmu > 0$ such that $\dualmu D_f(x,y) \leq D_k(\grad f(y), \grad f(x))$ for all $x,y \in \interior(\dom f)$, then for all $i > 0$ the iterates of Algorithm \ref{alg:dualspaceprecon} satisfy
 \begin{equation}
    \label{eq:dualstrngcvxgradientrate} f(x_i) - f(\xmin) \leq \left(1 - \frac{\dualmu}{\dualL}\right)^i (f(x_0) - f(\xmin)).
\end{equation}
 \end{theorem}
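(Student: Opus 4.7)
The strategy is to apply the descent lemma (Lemma~\ref{lemma:descent}) with two distinguished choices, $x = x_{i-1}$ and $x = \xmin$, exploiting the identities $\grad f(\xmin) = 0$ (valid since $\xmin \in \interior(\dom f)$ by Lemma~\ref{lemma:minimaoflegendrefns}) and $\grad k(0) = 0$ (since $k$ is Legendre convex and minimized at $0$). These imply $D_f(x, \xmin) = f(x) - f(\xmin)$ and $D_k(\grad f(x_i), 0) = k(\grad f(x_i)) - k(0) \geq 0$, which is what converts the Bregman terms in the descent lemma into function-value gaps.

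For the sublinear bound~\eqref{eq:dualgradientrate}, I would first set $x = x_{i-1}$ in Lemma~\ref{lemma:descent} to conclude $k(\grad f(x_i)) \leq k(\grad f(x_{i-1}))$, so that $\{k(\grad f(x_i))\}$ is non-increasing. Then I would set $x = \xmin$, drop the non-negative term $D_k(0, \grad f(x_{i-1}))$, and obtain
\begin{equation*}
    k(\grad f(x_i)) - k(0) \leq \dualL \bigl(f(x_{i-1}) - f(\xmin)\bigr) - \dualL \bigl(f(x_i) - f(\xmin)\bigr).
\end{equation*}
Summing from $i = 1$ to $N$ telescopes the right-hand side to at most $\dualL(f(x_0) - f(\xmin))$, and the monotonicity of $k(\grad f(x_i))$ then upgrades the average to a pointwise bound on $k(\grad f(x_N)) - k(0)$. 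For the convergence $\grad f(x_i) \to 0$, I would use radial unboundedness of $k$ (Lemma~\ref{lemma:radiallyunbounded}) to argue that the sequence $\{\grad f(x_i)\}$ is bounded, then use lower semi-continuity of $k$ together with uniqueness of its minimizer to force every accumulation point to equal $0$.

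For the linear bound~\eqref{eq:dualstrngcvxgradientrate}, I would instantiate the dual relative strong convexity hypothesis at $x = x_{i-1}$, $y = \xmin$ to obtain
\begin{equation*}
    \dualmu\bigl(f(x_{i-1}) - f(\xmin)\bigr) = \dualmu D_f(x_{i-1}, \xmin) \leq D_k(\grad f(\xmin), \grad f(x_{i-1})) = D_k(0, \grad f(x_{i-1})),
\end{equation*}
then plug into Lemma~\ref{lemma:descent} with $x = \xmin$, drop the nonnegative left-hand side $k(\grad f(x_i)) - k(0)$, and rearrange to obtain $\dualL(f(x_i) - f(\xmin)) \leq (\dualL - \dualmu)(f(x_{i-1}) - f(\xmin))$. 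Iterating from $i$ down to $0$ yields the contraction $(1 - \dualmu/\dualL)^i$.

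The telescoping and iteration are routine once the key reductions are in place; the only slightly delicate point is the $\grad f(x_i) \to 0$ claim, which requires the combined use of radial unboundedness (to rule out unbounded subsequences) and lower semi-continuity together with uniqueness of the minimizer of $k$ (to identify bounded accumulation points as $0$).
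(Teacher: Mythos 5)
Your proposal is correct and follows essentially the same route as the paper: apply Lemma \ref{lemma:descent} first with $x = x_{i-1}$ for monotonicity of $k(\grad f(x_i))$, then with $x = \xmin$ (using $\grad f(\xmin) = 0$ so the Bregman terms become function-value gaps), telescope for the sublinear rate, and retain the $D_k(0,\grad f(x_{i-1}))$ term rather than the $k(\grad f(x_i)) - k(0) \geq 0$ term when invoking dual relative strong convexity for the linear rate. Your treatment of $\grad f(x_i) \to 0$ via radial unboundedness (Lemma \ref{lemma:radiallyunbounded}) plus lower semi-continuity and uniqueness is a bit more explicit than the paper's one-line appeal to ``continuity and uniqueness,'' but it is the same idea spelled out.
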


\begin{proof}[Proof of Theorem \ref{thm:convergence}]
Let $C = \interior(\dom f)$. We have $x_i \in C$ and $k(\grad f(x_i)) \leq k(\grad f(x_{i-1}))$ by the Descent Lemma \ref{lemma:descent}. We also have $\xmin \in C$ by Lemma \ref{lemma:minimaoflegendrefns}. Finally, \eqref{eq:descentlemma} of Lemma \ref{lemma:descent} with $x = \xmin$ gives us,
\begin{equation}
    \label{eq:descentf}
    \begin{aligned}
        k(\grad f(x_i)) - k(0) &\leq \dualL (f(x_{i-1}) - f(x_i)) - D_k(0, \grad f(x_{i-1}))\\
        &\leq \dualL (f(x_{i-1}) - f(x_i))
    \end{aligned}
\end{equation}
Putting this together, we get
\begin{equation}
    \begin{aligned}
    i (k(\grad f(x_i)) - k(0)) &\leq \sum_{j=1}^i k(\grad f(x_i)) - k(0) \\
    &\leq \dualL (f(x_{0}) - f(x_i)).
    \end{aligned}
\end{equation}
Dividing by $i$ gives our first result. This implies that $k(\grad f(x_i)) \to k(0)$, which implies that $\grad f(x_i) \to 0$ by continuity and the uniqueness of $k$'s minimum. Now, assume that $f$ is Legendre convex and there exists $\dualmu > 0$ such that $\dualmu D_f(x,y) \leq D_k(\grad f(y), \grad f(x))$ for all $x,y \in \interior(\dom f)$. For all $i > 0$,
\begin{equation}
    \begin{aligned}
     \dualL(f(x_i) - f(\xmin)) &\overset{(a)}{\le} \dualL (f(x_{i-1}) - f(\xmin)) - D_k(0, \grad f(x_{i-1}))\\
     &\overset{(b)}{\le} \dualL (f(x_{i-1}) - f(\xmin)) - \dualmu(f(x_{i-1}) - f(\xmin)),
     \end{aligned}
\end{equation}
where $(a)$ follows from \eqref{eq:descentf} and the non-negativity of $k(\dualvarx) - k(0)$. $(b)$ follows from dual relative strong convexity. This inequality implies our desired result.
\end{proof}

Theorem \ref{thm:convergence} guarantees the convergence of the iterates of Algorithm \ref{alg:dualspaceprecon} under the assumption that dual relative smoothness hold globally for a fixed $\dualL$. Unfortunately it may be difficult to to derive a tight bound on $\dualL$ or small $\dualL$ may be appropriate locally. In this case, it may be useful to use a line search to choose $\dualL$. Consider the following generalization of the update rule of Algorithm \ref{alg:dualspaceprecon},
\begin{equation}
    \label{eq:linearsearchdualprecon} x_{i+1} = x_i - \frac{1}{\dualL_i} \grad k (\grad f(x_i))
\end{equation}
where $\dualL_i > 0$ is allowed to depend on the iteration. The next proposition shows that, under suitable assumptions, \eqref{eq:linearsearchdualprecon} converges with rates analogous to Theorem \ref{thm:convergence}.

\begin{proposition}[Adaptive step sizes]
\label{prop:adaptiveconvergence}
Let $f : \R^d \to \R \cup \{\infty\}$ be a proper closed convex function that is differentiable on $\interior(\dom f) \neq \emptyset$ and minimized at $\xmin$. Let $k : \R^d \to \R \cup \{\infty\}$ be a proper closed convex function that is differentiable on $\grad f(\interior(\dom f))$. If $x_0 \in \interior(\dom f)$ and for all $i > 0$ the iterates $x_i$ in \eqref{eq:linearsearchdualprecon} satisfy
\begin{enumerate}
    \item $x_{i}\in \interior(\dom f)$,
    \item \label{enum:descentkLi} $k(\grad f(x_{i}))\le k(\grad f(x_{i-1}))$,
    \item \label{enum:suffdescentLi} $k(\grad f(x_{i})) - k(0)\le \dualL_{i-1} (f(x_{i-1}) - f(x_i))$,
\end{enumerate}
then we have
\begin{equation}
    k(\grad f(x_i)) - k(0)\le \frac{\max_{0 \le j\le i-1} \dualL_j}{i}(f(x_0)-f(\xmin)).
\end{equation}
\end{proposition}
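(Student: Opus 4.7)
The plan is to mirror the strategy of Theorem \ref{thm:convergence}, but replace the use of Lemma \ref{lemma:descent} with direct appeals to hypotheses (1)--(3). Denote $\Lambda_i = \max_{0\le j\le i-1} \dualL_j$. First I would dispose of the trivial case: if $k(\grad f(x_i)) - k(0) \le 0$, the desired bound holds automatically since the right-hand side is nonnegative (using $f(x_0) \ge f(\xmin)$). So assume $k(\grad f(x_i)) - k(0) > 0$ from now on.

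Next I would chain the monotonicity assumption (2). For each $1 \le j \le i$, hypothesis (2) applied repeatedly gives $k(\grad f(x_i)) \le k(\grad f(x_j))$, so
\begin{equation*}
    i \bigl(k(\grad f(x_i)) - k(0)\bigr) \;\le\; \sum_{j=1}^i \bigl(k(\grad f(x_j)) - k(0)\bigr).
\end{equation*}
Applying hypothesis (3) term-by-term then yields
\begin{equation*}
    \sum_{j=1}^i \bigl(k(\grad f(x_j)) - k(0)\bigr) \;\le\; \sum_{j=1}^i \dualL_{j-1}\bigl(f(x_{j-1}) - f(x_j)\bigr).
\end{equation*}

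The one subtle step is bounding the right-hand sum by $\Lambda_i \bigl(f(x_0) - f(x_i)\bigr)$: this is only safe if each increment $f(x_{j-1}) - f(x_j)$ is nonnegative, otherwise replacing $\dualL_{j-1}$ by the larger $\Lambda_i$ goes the wrong way. Here I would use the case assumption: since $k(\grad f(x_i)) - k(0) > 0$ and monotonicity forces $k(\grad f(x_j)) - k(0) \ge k(\grad f(x_i)) - k(0) > 0$ for every $1\le j\le i$, hypothesis (3) combined with $\dualL_{j-1} > 0$ implies $f(x_{j-1}) - f(x_j) \ge 0$ as required. Telescoping then gives
\begin{equation*}
    \sum_{j=1}^i \dualL_{j-1}\bigl(f(x_{j-1}) - f(x_j)\bigr) \;\le\; \Lambda_i \bigl(f(x_0) - f(x_i)\bigr) \;\le\; \Lambda_i \bigl(f(x_0) - f(\xmin)\bigr),
\end{equation*}
using $f(x_i) \ge f(\xmin)$ in the last step. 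Dividing by $i$ completes the proof. The main obstacle is the sign issue in step (3) above; isolating the trivial case first removes it cleanly, and no additional assumption on $k$ being minimized at $0$ is needed for the argument to go through.
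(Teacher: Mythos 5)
Your proof is correct and follows the same core strategy as the paper's one-line argument (sum hypothesis 3 over $j$, use hypothesis 2 to lower-bound that sum by $i\,(k(\grad f(x_i)) - k(0))$, then telescope and divide by $i$), but you identify and repair a genuine sign gap that the published proof glosses over. The paper bounds $\sum_{j=1}^i \dualL_{j-1}\bigl(f(x_{j-1}) - f(x_j)\bigr)$ by $\bigl(\max_{0\le j\le i-1}\dualL_j\bigr)\bigl(f(x_0) - f(\xmin)\bigr)$ without comment, but replacing each $\dualL_{j-1}$ by the maximum is only safe when every increment $f(x_{j-1}) - f(x_j)$ is nonnegative; a negative increment multiplied by a larger constant flips the inequality, and telescoping no longer gives an upper bound. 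Under Assumption \ref{assumption:descent} one has $k$ minimized at $0$, so $k(\grad f(x_j)) - k(0) \ge 0$ and hypothesis 3 together with $\dualL_{j-1} > 0$ forces $f(x_{j-1}) - f(x_j) \ge 0$ immediately --- but the statement of Proposition \ref{prop:adaptiveconvergence} deliberately drops the minimized-at-$0$ hypothesis on $k$, so that deduction is not available directly. Your preliminary case split supplies exactly the missing observation: if $k(\grad f(x_i)) - k(0) \le 0$ the claim is vacuous since the right-hand side is nonnegative; otherwise hypothesis 2 propagates positivity of $k(\grad f(x_j)) - k(0)$ backward to all $j \le i$, and then hypothesis 3 with $\dualL_{j-1}>0$ yields $f(x_{j-1}) - f(x_j) \ge 0$, so the telescoping step is legitimate. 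This is a small but real improvement in rigor over the paper's proof, and it confirms the proposition holds exactly as stated.
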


\begin{remark}
In practice, a possible choice of step sizes is
\begin{equation}\label{eq:Lkichoice}
    \dualL_{i-1} =\min\{2^r, r\in \mathbb{Z}: \text{ 1., 2., and 3. of Proposition \ref{prop:adaptiveconvergence} are satisfied}\}.
\end{equation}
If $\dualL$ is the smallest real number such that $f$ is dual $\dualL$-smooth relative to $k$ (see Lemma \ref{lem:equivalentcond} for an equivalent condition), then this scheme satisfies that $\dualL_{i-1} < 2 \dualL$ for every $i > 0$ (hence we are making steps that are almost as large or larger as if we would use the smallest possible fixed $\dualL$, without knowing the value of $\dualL$ in advance). The search through the set in \eqref{eq:Lkichoice} for finding $\dualL_{i}$ can be initialized at $\dualL_{i-1}$.
\end{remark}

\begin{proof}[Proof of Proposition \ref{prop:adaptiveconvergence}]
The proof follows similar lines as in the previous case. First, by summing up the inequalities from \ref{enum:suffdescentLi}, we obtain that
\[\sum_{1\le j\le i}[k(\grad f(x_j)) - k(0)] \le \sum_{1\le j\le i} \dualL_{i-1} ( f(x_{i-1})-f(x_{i}))\le (f(x_0)-f(\xmin)) \max_{0\le j\le i-1} \dualL_j,\]
and using \ref{enum:descentkLi}., it follows that $\sum_{1\le j\le i} [k(\grad f(x_j)) - k(0)] \ge i (k(\grad f(x_i)) - k(0))$. The result follows directly.
\end{proof}

An important question that we do not address in this section is whether the sub-linear convergence of $k(\grad f(x_i)) - k(0)$ implies specific rates of convergence of other quantities of interest. These might be, for example, $\norm{x_i - \xmin}$ or $f(x_i) - f(\xmin)$. Rates for these will likely depend on both $f$ and $k$.

 \section{Applications}
\label{sec:applications}

\subsection{Exponential Penalty Functions}
Consider the following problem.
\begin{equation}\tag{LP}
\label{eq:lpprob}
    \min_{x \in \R^d} \{c^T x : Ax \leq b\},
\end{equation}
where $c \in \R^d$, $b \in \R^n$, and $A \in \R^{n \times d}$. Associate with this linear program the following relaxation into an unconstrained problem:     $\min_{x \in \R^d} f_{\tau}(x)$ for
\begin{align}
\label{eq:lpprobrelax}
    f_{\tau}(x) = c^Tx + \tau \sum_{i=1}^n \exp((A_ix - b_i)/\tau),
\end{align}
where $\tau > 0$ and $A_i$ is the $i$th row of $A$ (a row vector). This approximation of \eqref{eq:lpprob} with exponential penalty functions was studied by several authors (see  \cite{tseng1993convergence, cominetti1994asymptotic, polyak1997nonlinear, auslender1997asymptotic}) and is directly useful in the machine learning literature for boosting (see, \eg{} \cite{meir2003introduction}).
Here we design a dual reference function for $f_\tau$ under the following assumptions.
\begin{assumption}
\label{ass:exppenalty} Suppose that the following hold for problem \eqref{eq:lpprob}.
\begin{enumerate}
    \item $\norm{A_i} = 1$ for $1\le i\le n$.
    \item $A \in \R^{n \times d}$ is of full rank $d \leq n$.
    \item $P = \{x\in R^n: Ax\le b\}$ is a polytope, which is contained in a Euclidean ball of radius $R > 0$ and contains a Euclidean ball of radius $r > 0$.
\end{enumerate}
\end{assumption}
The dual reference function will be designed so that is smooth relative to $f_\tau^*$ and Algorithm \ref{alg:dualspaceprecon}, with appropriate step-size choices, converges with global guarantees.

Define the dual reference function $k : \R^d \to \R$,
\begin{equation}\label{eq:kdef}
k(\dualvarx) = \norm{\dualvarx}-\log(\norm{\dualvarx}+1).\end{equation}
This behaves like a quadratic $\|\dualvarx\|^2/2$ near its minimum $\dualvarx=0$ and like $\|\dualvarx\|$, \ie{} grows linearly, at infinity. It is also possible to verify that $k$ is Legendre convex. Furthermore, we have:
\begin{equation}
    \grad k(\dualvarx) = \frac{\dualvarx}{\norm{\dualvarx}+1}, \qquad \hess k(\dualvarx) =\frac{I}{\|\dualvarx\|+1}-\frac{\dualvarx {\dualvarx}^T}{(\|\dualvarx\|+1)^2 \|\dualvarx\|}.
\end{equation}
Hence, $[\hess k(\dualvarx)]^{-1} \succeq (1+\norm{\dualvarx})I$. From Proposition \ref{lemma:secondorderdualrelativecond} and this inequality it follows that the fact that $k$ is $\dualL$-smooth relative to $f^*_{\tau}$ is implied by
\begin{equation}
\label{eq:equivalentcondnewk}
\grad^2 f_{\tau}(x) \preceq \dualL \l[1+\|\grad f_{\tau}(x)\|\r] I \qquad \forall x\in \R^d.
\end{equation}
This is the strategy of the following theorem, which shows that $f_\tau$ is dual smooth to this choice of $k$ under our assumptions.
\begin{proposition}
\label{thm:exppenaltysmooth}
Under Assumption \ref{ass:exppenalty} for $f_{\tau}$ defined in \eqref{eq:lpprobrelax} and $k$ defined in \eqref{eq:kdef}, we have that
\begin{equation}
\label{eq:exppenaltyclaim}
    \hess f_{\tau}(x) \preceq \dualL_{\tau} [\hess k(\grad f_{\tau}(x))]^{-1} \qquad \forall x \in \R^d,
\end{equation}
where the dual relative smoothness constant is given by
\begin{equation}
    \label{eq:Lrdef}L_{\tau}^{*} = \frac{2R}{r} \frac{\l\|A^TA\r\|}{\tau}   (\eta +\|c\|).
\end{equation}
Here, $\norm{A^TA}$ is the induced matrix norm, and
\begin{equation}
\eta  =\sup_{\norm{s}_{\infty} \le 1} \norm{A^T s} \le \sqrt{n} \norm{A^T}_{\infty}.
\end{equation}
Because $f_\tau$ and $k$ are Legendre convex, $k$ is smooth relative to $f_{\tau}^*$ and Theorem \ref{thm:convergence} implies that  Algorithm \ref{alg:dualspaceprecon} converges with $k(\grad f(x_i))$ converging at a rate $\mathcal{O}(1/i)$.
\end{proposition}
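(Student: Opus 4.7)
The plan is to apply Proposition \ref{lemma:secondorderdualrelativecondnonLegendre}; once the pointwise matrix inequality \eqref{eq:exppenaltyclaim} is established, dual $L^*_\tau$-smoothness of $\K$ relative to $f_\tau^*$ follows via the Legendre equivalence of Proposition \ref{lem:equivalentcondLegendre}, and Theorem \ref{thm:convergence} yields the asserted convergence rate. Since $\grad f_\tau(\R^d) \subseteq \R^d = \interior(\dom \K)$ and both Hessians are nondegenerate on the relevant sets (as easily verified from the closed-form expressions), the second-order reduction applies. Using the bound $[\hess \K(y)]^{-1} \succeq (1+\|y\|) I$ recorded in the excerpt, it suffices to establish
\begin{equation*}
\hess f_\tau(x) \preceq L^*_\tau (1+\|\grad f_\tau(x)\|) I \qquad \forall x \in \R^d.
\end{equation*}

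A direct calculation yields $\hess f_\tau(x) = \tau^{-1} A^T W(x) A$ with $W(x) = \diag(w_i(x))$ and $w_i(x) = \exp((A_i x - b_i)/\tau)$, so by submultiplicativity of the operator norm, $\hess f_\tau(x) \preceq (\|A^T A\|/\tau) \max_i w_i(x) \cdot I$. Substituting the definition of $L^*_\tau$ reduces the full proposition to the scalar claim
\begin{equation*}
\max_i w_i(x) \leq \frac{2R}{r}(\eta + \|c\|)(1 + \|\grad f_\tau(x)\|), \qquad \forall x \in \R^d, \tag{$\star$}
\end{equation*}
relating the largest exponential weight at $x$ to $\|\grad f_\tau(x)\|$.

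To prove $(\star)$, I would exploit the polytope geometry of Assumption \ref{ass:exppenalty}. Translating so the center of the inscribed ball is at the origin, $x_c = 0$, gives $b_i \in [r, 2R]$ for every $i$. Boundedness of $P$ then yields the key geometric fact that any unit vector $v \in \R^d$ satisfies $\max_i A_i v \geq r/(2R)$: the ray $\{tv : t \geq 0\}$ must exit $P$ at some $t^* \leq 2R$ through a face with unit normal $A_{j^*}$ satisfying $t^* A_{j^*} v = b_{j^*} \geq r$. Combined with the identity $\grad f_\tau(x) = c + A^T w(x)$, which yields $\|A^T w\| \leq \|\grad f_\tau\| + \|c\|$, this reduces $(\star)$ to comparing $\max_i w_i$ with $\|A^T w\|$ through the geometry. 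The central estimate tests the gradient against the direction $x - x_c$: the elementary bound $\sum_i w_i \log w_i \geq w_{i^*} \log w_{i^*} - (n-1)/e$ (immediate from $w \log w \geq -1/e$ on the $n-1$ non-maximal weights), combined with $A_i(x - x_c) = g_i + (b_i - A_i x_c)$ and $b_i - A_i x_c \geq r$, gives $(A^T w)^T (x - x_c) \geq (g_{i^*} + r) w_{i^*} - \tau(n-1)/e$ (with $i^* \in \arg\max_i w_i$ and $g_i = A_i x - b_i$); the geometric fact supplies the complementary upper bound $\|x - x_c\| \leq (2R/r)(g_{i^*} + 2R)$, and a Cauchy--Schwarz step then produces a bound of the form $w_{i^*} \lesssim (2R/r)(\|\grad f_\tau\| + \|c\|) + O(1)$ in the regime where $g_{i^*}$ is large. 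The regime $g_{i^*} \leq 0$ is trivial, because then $w_{i^*} \leq 1$ while the right-hand side of $(\star)$ is already at least $1$.

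The main obstacle is producing the advertised constants exactly, in particular extracting the factor $\eta$ rather than an $R/r$ in the right place. The naive Cauchy--Schwarz estimate above delivers only a factor of $(2R/r)^2$, and recovering the tighter $(2R/r)(\eta + \|c\|)$ requires a finer argument that uses the duality $\eta = \sup_{\|s\|_\infty \leq 1} \|A^T s\| = \|A\|_{\ell_2 \to \ell_1}$, aggregating efficiently the contributions of several near-active constraints rather than being dominated by a single row. Once $(\star)$ is established, Assumption \ref{assumption:descent}.\ref{assumption:keyassumption} follows; the remaining conditions of Assumption \ref{assumption:descent} hold because $f_\tau$ and $\K$ are both Legendre convex with $\K$ uniquely minimized at $0$, so the stated $\mathcal{O}(1/i)$ convergence of $\K(\grad f(x_i))$ is immediate from Theorem \ref{thm:convergence}.
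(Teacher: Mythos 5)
Your high-level reduction is correct and matches the paper: compute $\hess f_\tau(x) = \tau^{-1}A^T W(x) A \preceq (\|A^TA\|/\tau)\max_i w_i(x)\,I$, use $[\hess k(\dualvarx)]^{-1}\succeq(1+\|\dualvarx\|)I$ via Proposition~\ref{lemma:secondorderdualrelativecond}, handle the inside-the-polytope case trivially (since $\eta+\|c\|\geq 1$), and establish the geometric bound $\|x-x_c\|\leq 2(\alpha(x)+r)R/r$ by showing $x_c + \tfrac{r}{\alpha(x)+r}(x-x_c)\in P\subset B_{x_c}(2R)$, where $\alpha(x)=\max_i(A_ix-b_i)$ and $x_c$ minimizes $\alpha$.

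However, your argument for the core scalar estimate $(\star)$ has a genuine gap, which you yourself flag: the entropy-type bound $\sum_i w_i\log w_i \geq w_{i^*}\log w_{i^*}-(n-1)/e$ followed by Cauchy--Schwarz only yields $(2R/r)^2$, not $(2R/r)$, and your appeal to ``a finer argument aggregating near-active constraints'' is unsubstantiated. The paper's actual argument is cleaner and entirely avoids both the entropy step and Cauchy--Schwarz. It splits the gradient into $G_{\mathcal I}+G_{\mathcal J}+c$ with $\mathcal I=\{i: A_ix>b_i\}$, and instead of Cauchy--Schwarz it lower-bounds $\|G_{\mathcal I}(x)\|$ by the inner product of $G_{\mathcal I}(x)$ with the unit vector $(x-x_c)/\|x-x_c\|$. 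The two observations that make this work are: (i) every term in $G_{\mathcal I}(x)^T(x-x_c)=\sum_{i\in\mathcal I}e^{(A_ix-b_i)/\tau}A_i(x-x_c)$ is nonnegative, since $A_i(x-x_c)\geq b_i+r-b_i>0$ for $i\in\mathcal I$, so one may drop all but the maximizing index; and (ii) that index contributes $e^{\alpha(x)/\tau}(\alpha(x)+r)$, and the factor $(\alpha(x)+r)$ cancels exactly against the geometric bound $\|x-x_c\|^{-1}\geq \tfrac{r}{2R}\tfrac{1}{\alpha(x)+r}$, leaving $\|G_{\mathcal I}(x)\|\geq\tfrac{r}{2R}e^{\alpha(x)/\tau}$. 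Combining with $\|G_{\mathcal I}\|\leq\|\grad f_\tau\|+\|G_{\mathcal J}\|+\|c\|\leq\|\grad f_\tau\|+\eta+\|c\|$ then gives the advertised $L^*_\tau$. So the missing idea is not a subtler use of $\eta$, but the observation that a single projection onto $x-x_c$ gives a one-term lower bound that cancels the extra $(\alpha+r)$ you were carrying.
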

\begin{remark}
From Theorem \ref{thm:convergence}, we have
\begin{equation}
\K(\grad f_{\tau}(x))\le \frac{\dualL_{\tau} (f_{\tau}(x_0)-f_{\tau}(\xmin))}{i}.
\end{equation}
This suggests that, if we can start from an initial point within the polytope, then we can reach a point where $\|\grad f_{\tau}(x)\|$ is significantly less than $\|c\|$ (which is expected to be near the minimum) in polynomial amount of steps, depending on the conditioning $R/r$ and the value of $\tau$. The step-size $1/\dualL_i$ can also be chosen adaptively, as explained in Proposition \ref{prop:adaptiveconvergence}. Near the minimum, both $f_{\tau}(x)$ and $k(\dualvarx)$ behave like quadratic functions, so local linear convergence rates hold. We believe that this iterative scheme is reasonably efficient for high dimensional well-conditioned polytopes, but in other less well conditioned instances it is outperformed by existing algorithms such as multiplicative weights \cite{arora2012multiplicative} or \cite{cominetti1994stable}, which is based on Newton's method (hence uses second-order information).
\end{remark}
\begin{proof}[Proof of Proposition \ref{thm:exppenaltysmooth}]
Note that $1 \leq \eta \leq n$, because $\norm{A_i} = 1$. Let $\alpha(x):=\max_{i\in [n]}(A_ix-b_i)$. Then $\alpha(x)<0$ inside the polytope and $\alpha(x)>0$ outside of it. By differentiation, we have
\begin{align}
    \grad f_{\tau}(x) &= \sum_{i=1}^n A_i \exp((A_ix - b_i)/\tau) + c,\\
    \hess f_{\tau}(x) &= \sum_{i=1}^n \frac{A_i^T A_i}{\tau} \exp((A_ix - b_i)/\tau). \label{eq:hessftau}
\end{align}
Note that $f_\tau$ is defined everywhere and differentiable. Furthermore, under our assumption that $\rank(A^TA) = \rank(A) = d$, it is evidently strictly convex and therefore Legendre.

The Hessian of $f_\tau$ satisfies
    \begin{align}\label{eq:hesstaualphaxbnd}
    \hess f_{\tau}(x) \preceq \exp(\alpha(x)/\tau) \frac{A^TA}{\tau} \preceq  \exp\l(\alpha(x)/\tau\r) \frac{\l\|A^TA\r\|}{\tau} I.
    \end{align}
Because $\eta \geq 1$, it is clear that the claim of the theorem holds for every $x$ where $\alpha(x)\le  0$ (i.e. inside the polytope or on its boundary).
From now on we will assume that $x$ is such that $\alpha(x)> 0$ (outside of the polytope). Let $x_c$ be a minimizer of $\alpha(x)$ (at least one exists since the polytope is compact and $\alpha(x)$ is a continuous function), then using the assumption $\|A_i\|=1$ it follows that $\alpha(x_c)=-r<0$. Hence $x\ne x_c$. We are going to need an upper bound on $\norm{x-x_c}$, which we will obtain as follows. By the definitions, we have $A_{i} x_c\le -r+b_i$ and $A_i x=A_i x-b_i+b_i\le \alpha(x)+b_i$, hence
\begin{align*}
    A_i \l(x_c+\frac{r}{\alpha(x) + r} (x-x_c) \r)&=
    \frac{r}{\alpha(x) + r} A_i x +\frac{\alpha(x)}{\alpha(x) + r}A_i x_c\\
    &\le \frac{r}{\alpha(x) + r}(\alpha(x)+b_i)+
    \frac{\alpha(x)}{\alpha(x) + r}(-r+b_i)=b_i.
\end{align*}
Therefore $x_c+\frac{r}{\alpha(x)+r} (x-x_c) \in P\subset B_{x_c}(2R)$, so
\begin{equation}\label{eq:recnormx}
0<\norm{x-x_c}\le 2\frac{\alpha(x)+r}{r} R \quad \text{ and }\quad  \norm{x-x_c}^{-1} \ge \frac{r}{\alpha(x)+r}  \frac{1}{2 R}.
\end{equation}
Let $\mathcal{I} = \{i \in [n]; \, A_i x - b_i > 0\}$, $\mathcal{J} = \{i \in [n]; \, A_i x - b_i \le 0\}$, and
\begin{equation}
G_{\mathcal{I}}(x) = \sum_{i \in \mathcal{I}} e^{\frac{1}{r}(A_i x - b_i)} A_i \qquad \qquad G_{\mathcal{J}}(x) = \sum_{i \in \mathcal{J}} e^{\frac{1}{r}(A_i x - b_i)} A_i.
\end{equation}
Then $\grad f_{\tau}(x) = G_{\mathcal{I}}(x) + G_{\mathcal{J}}(x)+c$. We have
\begin{align*}
\|G_{\mathcal{I}}(x)\| \ge  \frac{G_{\mathcal{I}}(x)^T(x-x_c)}{\norm{x-x_c}} &= \norm{x-x_c}^{-1}\sum_{i \in \mathcal{I}} e^{\frac{1}{r}(A_i x - b_i)} A_i (x-x_c) \\
&\overset{(a)}{\ge} \norm{x-x_c}^{-1} e^{\frac{\alpha(x)}{\tau}} (\alpha(x)+r) \\
&\overset{(b)}{\ge}  \frac{r}{2R} e^{\frac{\alpha(x)}{\tau}}.
\end{align*}
Here, $(a)$ follows from the facts that there is a $j \in \mathcal{I}$ such that $A_j (x - x_c) = \alpha(x) + b_j - A_j x_c \geq \alpha(x) + r$ and the fact that $A_i (x - x_c) \ge b_i + r - b_i > 0$ holds for every $i \in \mathcal{I}$. $(b)$ follows from \eqref{eq:recnormx}. From \eqref{eq:hesstaualphaxbnd} we obtain that
\begin{equation}
\begin{aligned}
    \hess f_{\tau}(x) &\preceq  \exp\l(\alpha(x)/\tau\r) \frac{\l\|A^TA\r\|}{\tau} I\\
    &\preceq   \frac{2R}{r} \frac{\l\|A^TA\r\|}{\tau} \norm{G_{\mathcal{I}}(x)} I\\
    &\preceq   \frac{2R}{r} \frac{\l\|A^TA\r\|}{\tau} (\norm{\grad f_{\tau}(x)} + \norm{G_{\mathcal{J}}(x)} + \norm{c}) I.
\end{aligned}
\end{equation}
Hence \eqref{eq:equivalentcondnewk} follows from the facts that $\|G_{\mathcal{J}}(x)\| \le \eta$ and $\eta + \norm{c} \geq 1$. As discussed \eqref{eq:exppenaltyclaim} follows from $[\hess k(\dualvarx)]^{-1} \succeq (1+\norm{\dualvarx})I$.
\end{proof}

 \subsection{\texorpdfstring{$p$}{p}-norm Regression}

Consider the following $p$-norm regression problem,
\begin{equation}\tag{pnorm}
    \label{eq:pnormreg}
    \min_{x \in \R^d} \norm{Ax - b}_p^p,
\end{equation}
where $A \in \R^{n \times d}$, $d \ll n$, $b \in \R^n$, and $p \geq 1$. This problem is a useful abstraction for some important graph problems, including Lipschitz learning on graphs \cite{kyng2015algorithms} and $\ell_p$-norm minimizing flows \cite{adil2019faster}. Algorithms specialized for $p$-norm regression have recently been studied in the theoretical computer science literature by several authors (see, \eg{} \cite{bubeck2018homotopy, adil2019iterative} and references therein). In this subsection, we design an appropriate dual reference function for \eqref{eq:pnormreg} under the following assumptions. Let $A_i$ denote the rows of $A$ (as row vectors).
\begin{assumption}
\label{ass:Lp}Suppose that the following hold for problem \eqref{eq:pnormreg}.
\begin{enumerate}
    \item $2 \le p < \infty$.
    \item $A$ is full rank $d$, and for all $x \in \R^d$ there is a subset $I(x) \subset [n]$ such that $A_ix \neq b_i$ for all $i \in I(x)$, and $\mathrm{span} \{A_i: i\in I(x)\}=\R^d$.
    \item
    $c_G=\inf_{\|s\|=1} \norm{As}_p^p>0$.
    \item
     $c_H=\inf_{u,v\in \R^d:\|u\|=1,\|v\|=1}\sum_{i=1}^n \abs{A_i u}^{p-2} (A_i v)^2>0$.
\end{enumerate}
\end{assumption}
\begin{remark} Although these assumptions seem restrictive, we can show that, if $n \ge 2d-1$ and $(A_i)_{1\le i\le n}$ and $(b_i)_{1\le i\le n}$ are chosen as independent random variables with densities that are absolutely continuous with respect to the Lebesgue measure on $\R^{d}$ and $\R$, then the assumptions hold with probability 1. Assumption 2 is implied by the stronger assumption that any $d$ rows of $A$ define a full rank $d$ matrix, and the maximal number of equalities $A_ix=b_i$ that hold for any $x$ is no more than $d$. This stronger version of Assumption 2, and Assumption 3 holds with probability 1 under the random allocation due to the fact that the set of real valued $d\times d$ matrices with determinant $0$ has Lebesgue-measure $0$ in $\R^{d\times d}$ (due to the fact that the determinant is a multivariate polynomial of the entries, and the zero set of such polynomials has Lebesgue measure zero unless they are constant 0, see \cite{caron2005zero}). The minimum in Assumption 4 is achieved for some $u_{\min}$ and $v_{\min}$ due to continuity and compactness of the unit sphere. Since any $d$ rows of $A$ form an independent basis with probability 1, it follows that $u$ and $v$ can be orthogonal to at most $d-1$ of them, respectively, so using $n \ge 2d-1$ there exists an $i$ in the sum $\sum_{i=1}^{n} \abs{A_i u_{\min}}^{p-2} (A_i v_{\min})^2$ that is non-zero, hence Assumption 4 holds.
\end{remark}

Consider the dual reference function $k : \R^d \to \R$,
\begin{equation}\label{eq:kdefpnorm}
k(\dualvarx) = \tfrac{1}{q}\l(\norm{\dualvarx}^2 + 1\r)^{\tfrac{q}{2}} - \tfrac{1}{q},
\end{equation}
for $q=\frac{p}{p-1}$ (hence $\tfrac{1}{p}+\tfrac{1}{q}=1$). This behaves like a quadratic $\| \dualvarx \|^2/2$ near its minimum $\dualvarx=0$ and like $\| \dualvarx \|^q/q$ at infinity. For this $k$, we have
\begin{equation}
\label{eq:kderivspnorm}
\begin{aligned}
    \grad k(\dualvarx) &= \dualvarx (1 + \|\dualvarx\|^2)^{\frac{q-2}{2}}\\
\end{aligned}
\end{equation}
As the next theorem shows dual relative strong convexity and smoothness of $k$ relative to the conjugate of \eqref{eq:pnormreg} hold under our assumptions.

\begin{proposition}\label{thm:Lplinconv}
Let $f(x) = \norm{Ax - b}_p^p$ be the $p$-norm objective. Under Assumption \ref{ass:Lp} for $k$ defined in \eqref{eq:kdefpnorm}, there exists $\dualmu, \dualL > 0$ such that
\begin{equation}
\label{eq:conddualmudualL}
\dualmu [\hess k(\grad f(x))]^{-1} \preceq\hess f(x) \preceq \dualL [\hess k(\grad f(x))]^{-1} \qquad \forall x \in \R^d.
\end{equation}
See  \eqref{eq:dualmu} and \eqref{eq:dualL} for the definitions of $\dualmu$ and $\dualL$. Because $f$ and $k$ are Legendre convex, $k$ is smooth and strongly convex relative to $f^*$ and Theorem \ref{thm:convergence} implies that  Algorithm \ref{alg:dualspaceprecon} converges with $f(x_i) - f(\xmin)$ converging at a linear rate $\mathcal{O}((1-\dualmu/\dualL)^i)$.
\end{proposition}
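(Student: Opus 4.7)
The plan is to reduce \eqref{eq:conddualmudualL} to a scalar sandwich on $\hess f(x)$ and then establish that sandwich in two regimes of $x$. First I would record $\hess f(x) = p(p-1)\sum_{i=1}^n A_i^T A_i |A_i x - b_i|^{p-2}$ and $\hess k(\dualvarx) = (1+\|\dualvarx\|^2)^{(q-2)/2} I + (q-2)(1+\|\dualvarx\|^2)^{(q-4)/2}\dualvarx\dualvarx^T$. Inverting $\hess k$ via Sherman--Morrison gives eigenvalue $(1+\|\dualvarx\|^2)^{(2-q)/2}$ on $\dualvarx^{\perp}$ and eigenvalue $\frac{1+\|\dualvarx\|^2}{1+(q-1)\|\dualvarx\|^2}(1+\|\dualvarx\|^2)^{(2-q)/2}$ along $\dualvarx$; both lie in $[(1+\|\dualvarx\|^2)^{(2-q)/2},(p-1)(1+\|\dualvarx\|^2)^{(2-q)/2}]$ since $q\in(1,2]$. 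Writing $\Phi(x):=(1+\|\grad f(x)\|^2)^{(p-2)/(2(p-1))}$, it thus suffices to find $\tilde\mu,\tilde L>0$ with $\tilde\mu\,\Phi(x)I\preceq \hess f(x)\preceq \tilde L\,\Phi(x)I$ for every $x\in\R^d$ and then invoke Proposition \ref{lemma:secondorderdualrelativecond} (whose hypotheses hold because $p\geq 2$ makes $f$ twice continuously differentiable with a pointwise positive-definite Hessian by Assumption \ref{ass:Lp}.2, and $k$ is manifestly Legendre convex and smooth away from $0$).

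For the upper bound I would use $\hess f(x)\preceq p(p-1)\|A^TA\|\,\|Ax-b\|_\infty^{p-2}I$ and then show $\|Ax-b\|_\infty^{p-1}\lesssim 1+\|\grad f(x)\|$. Writing $y=Ax-b$ and $v_i=|y_i|^{p-2}y_i$, the Euler-type identity $x^T\grad f(x)=p\|y\|_p^p+p\,b^Tv$ combined with H\"older's inequality $|b^Tv|\leq \|b\|_p\|v\|_q=\|b\|_p\|y\|_p^{p-1}$ (since $(p-1)q=p$) gives
\begin{equation*}
\|x\|\,\|\grad f(x)\|\;\geq\;p\|y\|_p^{p-1}\l(\|y\|_p-\|b\|_p\r).
\end{equation*}
Assumption \ref{ass:Lp}.3 bounds $c_G^{1/p}\|x\|\leq \|Ax\|_p\leq \|y\|_p+\|b\|_p$, so whenever $\|y\|_p\geq 2\|b\|_p$ one has $\|\grad f(x)\|\gtrsim \|y\|_p^{p-1}\geq \|y\|_\infty^{p-1}$, while for $\|y\|_p\leq 2\|b\|_p$ the quantity $\|y\|_\infty^{p-1}$ is bounded by an absolute constant. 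Raising to the power $(p-2)/(p-1)$ and using $1+t\leq\sqrt{2(1+t^2)}$ then yields $\|y\|_\infty^{p-2}\leq C\,\Phi(x)$.

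For the lower bound I would split into a compact regime and an asymptotic regime. In the asymptotic regime, fix unit $v\in\R^d$ and set $\hat x=x/\|x\|$. Partitioning indices according to whether $|A_i\hat x|\geq \delta$, Assumption \ref{ass:Lp}.4 yields
\begin{equation*}
\sum_{\{i:\,|A_i\hat x|\geq \delta\}}(A_iv)^2\;\geq\;\frac{c_H-\delta^{p-2}\|A^TA\|}{\|A\|_{\mathrm{op}}^{p-2}},
\end{equation*}
which is bounded below uniformly in $(\hat x,v)$ once $\delta$ is small enough. For $\|x\|\geq 2\|b\|_\infty/\delta$ these indices satisfy $|y_i|\geq \delta\|x\|/2$, so $v^T\hess f(x)v\gtrsim \|x\|^{p-2}$. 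A direct estimate $\|\grad f(x)\|\lesssim \|x\|^{p-1}$ shows $\Phi(x)\lesssim 1+\|x\|^{p-2}$, so $\|x\|^{p-2}$ dominates $\Phi(x)$ outside a large enough ball. Inside that ball Assumption \ref{ass:Lp}.2 makes $\hess f(x)\succ 0$ pointwise; coercivity of $f$ (from Assumption \ref{ass:Lp}.3) makes the relevant sublevel set compact, so continuity of $\hess f$ delivers a uniform bound $\hess f(x)\succeq \epsilon I$ there, and since $\Phi(x)$ is bounded on that compact set we obtain $\hess f(x)\succeq \mu_0\,\Phi(x)I$.

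The main technical obstacle I expect is verifying the \emph{uniformity} of the index-splitting estimate: Assumption \ref{ass:Lp}.4 must be shown to deliver a lower bound on $\sum_{\{i:|A_i\hat x|\geq\delta\}}(A_iv)^2$ that is independent of both $\hat x$ and $v$, and this uniformity is what lets the asymptotic lower bound hold simultaneously in every direction. A secondary coordination issue is choosing the transition radius between the compact and asymptotic regimes so that a single pair $(\dualmu,\dualL)$ works for all $x\in\R^d$; this is in principle straightforward once both regimes produce explicit constants, but it must be done carefully if one wants closed-form expressions for the constants appearing in \eqref{eq:conddualmudualL}.
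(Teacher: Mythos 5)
Your proposal is correct and reaches the same sandwich $\Phi(x)I \preceq [\hess k(\grad f(x))]^{-1} \preceq (p-1)\Phi(x)I$ with $\Phi(x)=(1+\|\grad f(x)\|^2)^{(p-2)/(2(p-1))}$ as the paper does, via Proposition \ref{lemma:secondorderdualrelativecond}, but the remaining reduction to a sandwich on $\hess f$ is handled by a genuinely different argument. The paper establishes two separate lemmas (\ref{lem:gradbnd} and \ref{lem:Hessbnd}) that box $\|\grad f(x)\|$ between $L_G\|x\|^{p-1}-C_G$ and $U_G\|x\|^{p-1}+D_G$, and $\hess f(x)$ between $(L_H\|x\|^{p-2}+C_H)I$ and $(U_H\|x\|^{p-2}+D_H)I$, and then substitutes one family of estimates into the other with explicit constants. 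You bypass the $\|x\|$-power intermediary entirely: for the upper bound you compare $\hess f(x)$ directly to $\|Ax-b\|_\infty^{p-2}$ and then relate $\|Ax-b\|_\infty^{p-1}$ to $1+\|\grad f(x)\|$ via the Euler-type identity $x^T\grad f(x)=p\|y\|_p^p+pb^Tv$ combined with H\"older, which is a slick shortcut; for the lower bound you split into a compact ball (using continuity of $\hess f\succ 0$) and an exterior region where the index-splitting on $\hat x = x/\|x\|$ turns Assumption \ref{ass:Lp}.4 into $v^T\hess f(x)v\gtrsim\|x\|^{p-2}\gtrsim\Phi(x)$. The uniformity concern you flag is in fact already resolved by the inequality $\sum_{\{i:|A_i\hat x|\geq\delta\}}(A_iv)^2\geq(c_H-\delta^{p-2}\|A^TA\|)/\|A\|_{\mathrm{op}}^{p-2}$ that you write down, whose right-hand side is manifestly independent of $\hat x$ and $v$. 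What each route buys: the paper's modular lemmas produce a ready-made explicit form for $(\dualmu,\dualL)$ (equations \eqref{eq:dualmu}--\eqref{eq:dualL}) useful for the subsequent discussion of conditioning, whereas your argument is shorter and conceptually cleaner but yields less tracked constants, in particular an $\epsilon$ from the compactness step; that said, the paper's $\rho_H$ in Lemma \ref{lem:Hessbnd} is also a nonconstructive infimum over a compact set, so the effective gap in explicitness is smaller than it may appear. One cosmetic fix: the display $\|\grad f(x)\|\lesssim\|x\|^{p-1}$ should read $\|\grad f(x)\|\lesssim 1+\|x\|^{p-1}$ (the affine offset from $b$ matters for small $\|x\|$), but your downstream conclusion $\Phi(x)\lesssim 1+\|x\|^{p-2}$ is unaffected.
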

To test the empirical performance of this method, we have implemented it with $A_i$, $b$, and $x_0$ i.i.d. as standard normals for power $p=4$, $d \in \{10^2, 10^3, 10^4\}$, and $n=10 d$. The inverse step-size $\dualL_0$ was chosen to be $\dualL_0=1$ initially, and multiplied by 2 if the function value would increase due to too large steps (hence this was chosen adaptively in the beginning, but $\dualL_i$ was never decreased later on). As Figure \ref{fig:Lp} shows, empirically our method seems to be performing well, with high precision achieved after 50-80 gradient evaluations, and the convergence rate seems to be mostly unaffected by the dimension $d$. Hence in this random setting dual space preconditioning is indeed very efficient, and competitive with previous works \cite{bubeck2018homotopy, adil2019iterative, adil2019faster} which had dimension dependent convergence rates. We think that based on Proposition \ref{thm:Lplinconv}, it can be shown that with high probability, dimension-free convergence rates hold in this random scenario when the number of vectors $n$ tends to infinity (the proof would be based on concentration inequalities for empirical processes, see e.g. \cite{boucheron2013concentration} for an overview of such inequalities).
Note however that we do not believe this always to be the case for general non-random $A$ and $b$, and there could be instances of very poor conditioning (such as when $n\approx d$) where the homotopy method of \cite{bubeck2018homotopy} or the IRLS method of \cite{adil2019fast} could perform  better.
\begin{figure}
    \centering
    \includegraphics[scale=0.4]{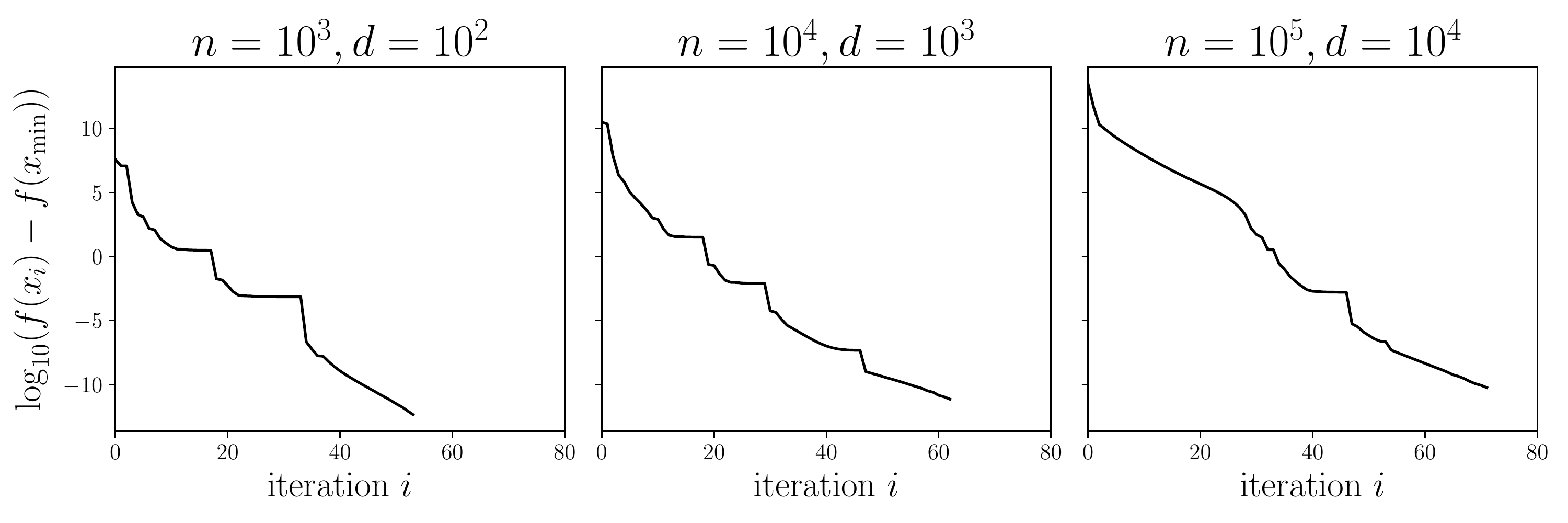}
    \caption{Convergence rates for $p$-norm regression are mostly unaffected by the dimension $d$ for these random instances with $p=4$.}
    \label{fig:Lp}
\end{figure}
The proof of Proposition \ref{thm:Lplinconv} is based on the following two lemmas.
\begin{lemma}[Bounds on the gradient]\label{lem:gradbnd}
Let $f(x) = \norm{Ax - b}_p^p$ be the $p$-norm objective for \eqref{eq:pnormreg}. Under Assumption \ref{ass:Lp}, we have
\begin{equation}
    L_G \|x\|^{p-1}-C_{G}\le \|\grad f(x)\|\le U_G \|x\|^{p-1}+D_{G}
\end{equation}
for all $x \in \R^d$, with constants
\begin{align*}
    L_G&=2^{-p+1} c_G=2^{-p+1} \inf_{\|s\|=1} \norm{As}_p^p, \quad C_G=\l(\sum_{i=1}^{n} |b_i|^{p}\r)^{(p-1)/p} \cdot c_G^{1/p},\\
    U_G&=2^{p-2}(p+1) \sup_{\|s\|=1} \norm{As}_p^p,\quad D_G=2^{p-2}(p-1)
    \l(\sum_{i=1}^{n} |b_i|^{p}\r).
\end{align*}
\end{lemma}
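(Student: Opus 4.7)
The plan is to compute $\grad f$ directly and reduce both bounds to estimates on $\norm{Ax-b}_p$ using duality and standard scalar inequalities. Writing the componentwise map $\phi(u) := |u|^{p-2}u$, one has $\grad f(x) = p\, A^T \phi(Ax-b)$, and the key elementary identity is $\norm{\phi(u)}_q = \norm{u}_p^{p-1}$ for $q = p/(p-1)$. With $\sigma_{\max} := \sup_{\norm{v}=1}\norm{Av}_p = \sup_{\norm{w}_q=1}\norm{A^T w}$, this duality links the Euclidean gradient norm to the $p$-norm of the residual.

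First I would prove the upper bound. Duality gives $\norm{\grad f(x)} \leq p\,\sigma_{\max}\norm{Ax-b}_p^{p-1}$. Combining the triangle inequality $\norm{Ax-b}_p \leq \sigma_{\max}\norm{x} + \norm{b}_p$ with the convexity estimate $(a+b)^{p-1} \leq 2^{p-2}(a^{p-1}+b^{p-1})$ (valid for $p-1 \geq 1$) will yield a bound of the form $p\,2^{p-2}\sigma_{\max}^p\norm{x}^{p-1} + p\,2^{p-2}\sigma_{\max}\norm{b}_p^{p-1}$. A Young's inequality step $p\,\sigma_{\max}\norm{b}_p^{p-1} \leq \sigma_{\max}^p + (p-1)\norm{b}_p^p$ then absorbs the cross term to produce the claimed $U_G = 2^{p-2}(p+1)\sigma_{\max}^p$ and $D_G = 2^{p-2}(p-1)\norm{b}_p^p$ (the additional $\sigma_{\max}^p$ is what raises the $\norm{x}^{p-1}$ coefficient from $p$ to $p+1$).

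Next I would prove the lower bound. Cauchy--Schwarz gives $\norm{\grad f(x)}\norm{x} \geq \inner{\grad f(x)}{x}$, and I would expand
\begin{equation*}
    \inner{\grad f(x)}{x} = p\norm{Ax-b}_p^p + p\inner{\phi(Ax-b)}{b} \geq p\norm{Ax-b}_p^{p-1}\l(\norm{Ax-b}_p - \norm{b}_p\r)
\end{equation*}
using H\"older on the cross term. The reverse triangle inequality together with $\norm{Ax}_p \geq c_G^{1/p}\norm{x}$ gives $\norm{Ax-b}_p \geq c_G^{1/p}\norm{x} - \norm{b}_p$. When $c_G^{1/p}\norm{x} \leq 2\norm{b}_p$ the claim's right-hand side $L_G\norm{x}^{p-1} - C_G$ is non-positive so the inequality is trivial; in the complementary regime I would substitute, simplify, and match $L_G = 2^{1-p}c_G$ and $C_G = \norm{b}_p^{p-1}c_G^{1/p}$.

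The main obstacle is matching the exact closed-form constants. Both the Young step in the upper bound and the small-versus-large-$\norm{x}$ case split in the lower bound require careful bookkeeping; concretely, after substitution the lower bound reduces to the one-variable inequality $p(r-1)^{p-1}(r-2) \geq 2^{1-p}r^p - r$ for $r := c_G^{1/p}\norm{x}/\norm{b}_p \geq 2$, which should follow from $p \geq 2$ by checking the value and derivative at $r = 2$ together with the asymptotic behavior as $r \to \infty$.
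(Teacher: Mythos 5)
Your approach is essentially sound and closely mirrors the paper's, with one genuine divergence on the lower bound. For the upper bound, both you and the paper start from the explicit gradient formula, estimate $\norm{\grad f(x)}$ via a duality bound involving the $\ell^q$-norm of the residual raised to $p-1$ (the paper works termwise with $\sup_{\norm{v}=1}\sum_i\abs{A_ix-b_i}^{p-1}\abs{A_iv}$, you with $\norm{\phi(Ax-b)}_q=\norm{Ax-b}_p^{p-1}$, but the content is the same), split via the convexity estimate $(a+b)^{p-1}\leq 2^{p-2}(a^{p-1}+b^{p-1})$, and finish with Young's inequality on the cross term. For the lower bound, both start from $\norm{\grad f(x)}\,\norm{x}\geq\inner{\grad f(x)}{x}$, but then diverge: the paper works termwise, applying Young's inequality on each $\abs{A_ix-b_i}^{p-1}\abs{b_i}$ and then the $C_p$ inequality $\abs{A_ix-b_i}^p+\abs{b_i}^p\geq 2^{1-p}\abs{A_ix}^p$, arriving at $\norm{\grad f(x)}\geq 2^{1-p}c_G\norm{x}^{p-1}-2\norm{b}_p^p/\norm{x}$, from which the claim follows by a sign-check depending on whether $\norm{x}$ exceeds $2\norm{b}_p/c_G^{1/p}$. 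You instead apply H\"older to the aggregate cross term $\inner{\phi(Ax-b)}{b}$ and the reverse triangle inequality, which forces you to verify the one-variable inequality $p(r-1)^{p-1}(r-2)\geq 2^{1-p}r^p-r$ for $r\geq 2$. That inequality is indeed true for $p\geq 2$ (using $(r-1)\geq r/2$ it reduces to $2^{1-p}r^{p-2}\bigl((p-1)r-2p\bigr)+1\geq 0$, which follows from $(p/(p-1))^{p-2}\leq 2^{p-2}$), but the paper's termwise route is more transparent and avoids a separate single-variable lemma. One caveat applying equally to your proof and the paper's: the final absorption of the extra $2^{p-2}\sup_{\norm{s}=1}\norm{As}_p^p$ term into the coefficient of $\norm{x}^{p-1}$ implicitly requires $\norm{x}\geq 1$; the stated $D_G$ is in fact too small for the closed-form upper bound to hold near $x=0$, so this is a small defect in the lemma's constants rather than a gap peculiar to your argument.
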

\begin{proof}
By differentiation, we have
\begin{equation}
    \label{eq:Lpgradf}
    \grad f(x) = p \sum_{i=1}^n \abs{A_i x - b_i}^{p-2}(A_i x - b_i) A_{i},
\end{equation}
thus
\begin{align*}
    &\|\grad f(x)\| = p \l\|\sum_{i=1}^n \abs{A_i x - b_i}^{p-2}(A_i x - b_i) A_{i}\r\|\\
    &\ge \max\l(\frac{p}{\|x\|}\sum_{i=1}^n \abs{A_i x - b_i}^{p-2}(A_i x - b_i) A_{i}x,0\r)
    \\
    &= \max\l(\frac{p}{\|x\|}\sum_{i=1}^n \l[\abs{A_i x - b_i}^{p-2}(A_i x - b_i)^2 + \abs{A_i x - b_i}^{p-2}(A_i x - b_i)b_i\r],0\r)
    \\
    &\ge \max\l(\frac{p}{\|x\|}\sum_{i=1}^n \l(\abs{A_i x - b_i}^{p}-\abs{A_i x - b_i}^{p-1}|b_i|\r),0\r),
    \intertext{now by Young's inequality $\abs{A_i x - b_i}^{p-1}|b_i|\le \abs{A_i x - b_i}^{p} \frac{p-1}{p}+\frac{\abs{b_i}^p}{p}$, hence}
    &\ge \max\l(\frac{1}{\|x\|}\sum_{i=1}^n \l(\abs{A_i x - b_i}^{p}-|b_i|^p\r),0\r)\\
    \intertext{ using the fact that $\abs{a+b}^p\le (\abs{a}+\abs{b})^p=\l(\frac{2\abs{a}+2\abs{b}}{2}\r)^p\le 2^{p-1} (\abs{a}^p+\abs{b}^p)$ by convexity (this is so-called the $C_p$ inequality),
    so $\abs{A_i x - b_i}^p+\abs{b_i}^p\ge 2^{-p+1}  \abs{A_i x}^p$, hence}
    &\ge \max\l(\frac{1}{\|x\|}\sum_{i=1}^n \l(2^{-p+1}\abs{A_i x}^{p}-2|b_i|^p\r),0\r)\\
    &\ge \max\l(2^{-p+1} \l[\inf_{\|s\|=1}\|As\|_p^p\r] \cdot \|x\|^{p-1}-\frac{2\sum_{i=1}^{n}|b_i|^p}{\|x\|},0\r),
\end{align*}
and the lower bound follows from Assumption \ref{ass:Lp} by straightforward rearrangement. For the upper bound, notice that
\begin{align*}
\|\grad f(x)\| &\le p \sup_{\|v\|=1}\sum_{i=1}^n \abs{A_i x - b_i}^{p-1} |A_{i}v|\\
&\le 2^{p-2}p \sup_{\|v\|=1}\sum_{i=1}^n \l(\abs{A_i x}^{p-1}|A_{i}v|+\abs{b_i}^{p-1}|A_{i}v|\r)\\
&\le 2^{p-2}p \l[\|x\|^{p-1}\sup_{\|s\|=1,\|v\|=1}\sum_{i=1}^n \l(\abs{A_i s}^{p-1}|A_{i}v|\r)+\sup_{\|v\|=1}\sum_{i=1}^n\abs{b_i}^{p-1}|A_{i}v|\r]\\
&\le 2^{p-2}p \l[\frac{p+1}{p}\sup_{\|s\|=1}\|As\|_p^p+\frac{p-1}{p}\sum_{i=1}^n\abs{b_i}^{p}\r],
\end{align*}
hence the result follows. The last step uses Fenchel-Young, and rearrangement.
\end{proof}

\begin{lemma}[Bounds on the Hessian]\label{lem:Hessbnd}
Let $f(x) = \norm{Ax - b}_p^p$ be the $p$-norm objective. Suppose that Assumption \ref{ass:Lp} holds, and let
\begin{align}\label{eq:RHdef}R_H&=\l\|\sum_{i=1}^{n}|b_i|^{p-2}A_{i}^T A_{i}\r\|^{1/(p-2)}/(c_H 2^{-p})^{1/(p-2)},\\
\label{eq:rhoHdef}
\rho_H&=\inf_{\|x\|\le R_H} \lambda_{\min}(\grad^2 f(x))=
\inf_{\|x\|\le 1, \|u\|=1}p (p-1) \sum_{i=1}^n \abs{A_i x - b_i}^{p-2} (A_i u)^2.
\end{align}
Then $\rho_H>0$, and we have
\begin{equation}\label{eq:lemHessbnd}
(L_H \|x\|^{p-2}+C_{H})I\preceq \grad^2 f(x)\preceq (U_H \|x\|^{p-2}+D_{H})I
\end{equation}
for all $x \in \R^d$, with constants
\begin{align*}
    L_H&=\min\l(p (p-1)2^{-p-1} c_H, \frac{\rho_{H}}{2R_H^{p-2}}\r),\\
    U_H&=2^{p-3}p(p-1)\sup_{\|u\|=1,\|v\|=1}\sum_{i=1}^n \abs{A_i u}^{p-2} (A_i v)^2,\\
    C_H&=\min\l(\frac{\rho_H}{2},p (p-1)2^{-p-1} c_H R_H^{p-2}\r),
    D_H=p (p-1) 2^{p-3}\l\|\sum_{i=1}^n |b_i|^{p-2} A_{i}^T A_{i}\r\|.
\end{align*}
\end{lemma}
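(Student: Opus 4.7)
The plan is to work directly with the explicit Hessian formula $\grad^2 f(x) = p(p-1) \sum_{i=1}^n |A_i x - b_i|^{p-2} A_i^T A_i$, obtained by differentiating \eqref{eq:Lpgradf}. Both matrix inequalities in \eqref{eq:lemHessbnd} then reduce to sandwiching the scalar quadratic form $u^T \grad^2 f(x) u = p(p-1) \sum_i |A_i x - b_i|^{p-2} (A_i u)^2$ uniformly over unit $u$.

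Before turning to the bounds, I would establish $\rho_H > 0$. The map $x \mapsto \lambda_{\min}(\grad^2 f(x))$ is continuous, so its infimum on the compact ball $\{\|x\|\le R_H\}$ is attained at some $x^\star$, and by compactness of the unit sphere the associated quadratic form is minimised by some unit $u^\star$. If $\lambda_{\min}(\grad^2 f(x^\star))$ were zero, each term $|A_i x^\star - b_i|^{p-2}(A_i u^\star)^2$ would have to vanish, so for every $i$ one of $A_i x^\star = b_i$ or $A_i u^\star = 0$ would hold (with the first alternative unavailable when $p=2$). By Assumption \ref{ass:Lp}.2 the indices with $A_i x^\star \neq b_i$ span $\R^d$, which together with $A_i u^\star = 0$ on that spanning set would force $u^\star = 0$, contradicting $\|u^\star\|=1$.

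For the upper bound I would use a $C_p$-type inequality to split $|A_i x - b_i|^{p-2} \le 2^{p-3}(|A_i x|^{p-2} + |b_i|^{p-2})$ term by term. Homogeneity on the first piece gives $\sum_i |A_i x|^{p-2}(A_i u)^2 = \|x\|^{p-2}\sum_i |A_i (x/\|x\|)|^{p-2}(A_i u)^2 \le \|x\|^{p-2} \sup_{\|s\|=\|v\|=1} \sum_i |A_i s|^{p-2}(A_i v)^2$, which is exactly the definition encoded in $U_H$. The second piece is bounded in operator norm by $\bigl\|\sum_i |b_i|^{p-2} A_i^T A_i\bigr\|$, producing $D_H$.

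For the lower bound I would split on $\|x\|\le R_H$ versus $\|x\|>R_H$. On the compact region, $\lambda_{\min}(\grad^2 f(x)) \ge \rho_H$ by definition, and the choices $L_H \le \rho_H/(2R_H^{p-2})$ and $C_H \le \rho_H/2$ give $L_H \|x\|^{p-2} + C_H \le \rho_H$. On the exterior region I would apply the reverse bound $|A_i x|^{p-2} \le 2^{p-3}(|A_i x - b_i|^{p-2} + |b_i|^{p-2})$, rearranged as $|A_i x - b_i|^{p-2} \ge 2^{3-p}|A_i x|^{p-2} - |b_i|^{p-2}$, yielding
\[u^T \grad^2 f(x) u \ge p(p-1)\Bigl[2^{3-p} c_H \|x\|^{p-2} - \bigl\|\textstyle\sum_i |b_i|^{p-2} A_i^T A_i\bigr\|\Bigr].\]
The definition \eqref{eq:RHdef} of $R_H$ is calibrated precisely so that, for $\|x\| \ge R_H$, the subtracted term is dominated by a fixed fraction of $2^{3-p} c_H \|x\|^{p-2}$, leaving the advertised growth $L_H \|x\|^{p-2}$. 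The main obstacle is the bookkeeping that reconciles the compact and asymptotic regimes into a single pair of constants $(L_H, C_H)$; this is exactly why the lemma defines each of them as the minimum of the ``compact'' bound coming from $\rho_H$ and the ``asymptotic'' bound coming from $2^{-p}c_H$, so that both estimates hold simultaneously at the interface $\|x\|=R_H$.
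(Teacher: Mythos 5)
Your proposal is correct and follows the same approach as the paper's proof: both rely on the explicit Hessian formula, split the lower-bound argument into the compact ball $\{\|x\|\le R_H\}$ (where $\rho_H>0$ applies, established via compactness and Assumption \ref{ass:Lp}.2) and its complement (where a $C_p$-type inequality $|A_ix-b_i|^{p-2}\ge c\,|A_ix|^{p-2}-|b_i|^{p-2}$ combined with the calibration of $R_H$ gives the growth term), and prove the upper bound by the complementary $C_p$ splitting $|A_ix-b_i|^{p-2}\le 2^{p-3}(|A_ix|^{p-2}+|b_i|^{p-2})$ together with homogeneity. The only discrepancy is a small constant factor in the reverse $C_p$ inequality ($2^{3-p}$ versus the paper's $2^{1-p}$), which does not affect the structure of the argument.
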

\begin{proof}
We have by differentiation
\begin{equation}
\label{eq:LpHessf}
    \hess f(x) = p (p-1) \sum_{i=1}^n \abs{A_i x - b_i}^{p-2} A_{i}^T A_{i}.
\end{equation}
Notice that using the fact that $|a-b|^{p-2}+|b|^{p-2}\ge 2^{-(p-1)}|a|^{p-2}$, we have
\begin{align*}
    \hess f(x) &= p (p-1) \sum_{i=1}^n \abs{A_i x - b_i}^{p-2} A_{i}^T A_{i}\\
    &\succeq p (p-1) \sum_{i=1}^n \l(2^{-(p-1)}|A_ix|^{p-2}- |b_i|^{p-2}\r) A_{i}^T A_{i}\\
    &\succeq p (p-1)2^{-(p-1)} c_H \|x\|^{p-2}-p (p-1)\l\|\sum_{i=1}^{n}|b_i|^{p-2}A_{i}^T A_{i}\r\|.
\end{align*}
Let $R_H$ be as in \eqref{eq:RHdef}, then using the above bound, we can see that for $\|x\|\ge R_H$, we have
\begin{equation}\label{eq:outsideofradiusRH}
\begin{aligned}
    \hess f(x) &\succeq p (p-1)2^{-p} c_H \|x\|^{p-2} I \\
    &\succeq p (p-1)2^{-p-1} c_H \|x\|^{p-2}+p (p-1)2^{-p-1} c_H R_H^{p-2}.
\end{aligned}
\end{equation}
Since the minimum of the continuous function $\lambda_{\min}(\hess f(x))$ is achieved on the compact set $B_{R_H}$, and by the second part of Assumption \ref{ass:Lp}, it cannot be zero, and hence $\rho_{H}>0$ and
$\grad^2 f(x)\succeq \rho_H I$ for every $x\in B_{R_H}$. The lower bound in \eqref{eq:lemHessbnd} follows by combining this with \eqref{eq:outsideofradiusRH}. For the upper bound, using the inequality $|a+b|^{p-2}\le 2^{p-3}(|a|^{p-2}+|b|^{p-2})$, we obtain that
\begin{align*}
    \hess f(x) &\preceq p (p-1) 2^{p-3}\sup_{\|s\|=1}
    \l\|\sum_{i=1}^n \abs{A_i s}^{p-2} A_{i}^T A_{i}\r\|\cdot \|x\|^{p-2}\\
    &+p (p-1) 2^{p-3}\l\|\sum_{i=1}^n |b_i|^{p-2} A_{i}^T A_{i}\r\|.
\end{align*}
\end{proof}

Now we are ready to prove our main result in this section.
\begin{proof}[Proof of Proposition \ref{thm:Lplinconv}]
First, both $f$ and $k$ are Legendre convex in this case. This is easy to verify for $k$, and evidently $f$ is differentiable everywhere. To verify strict convexity of $f$, note that $\hess f(x) \succ 0$ under part two of Assumption \ref{ass:Lp}. Since both $f$ and $\K$ are twice differentiable, by Proposition \ref{lemma:secondorderdualrelativecond}, it suffices to check that \eqref{eq:conddualmudualL} holds for the linear convergence of Algorithm \ref{alg:dualspaceprecon}. We have by differentiation,
\begin{equation}
        \hess k(\dualvarx) = (1 + \|\dualvarx\|^2)^{\frac{q-2}{2}} I + (q-2) (1 + \|\dualvarx\|^2)^{\frac{q-4}{2}}  \dualvarx{\dualvarx}^T.
\end{equation}
Now it is easy to see that for $p \in [2, \infty)$, we have $q = p/(p-1) \in (1, 2]$ and it is not difficult to verify that $\grad^2 \K$ satisfies that for all $x^* \in \R^d$,
\begin{equation}
    (1 + \|x^*\|^2)^{\frac{1}{2} \frac{p-2}{p-1}} I \preceq \l[\grad^2 \K(x^*) \r]^{-1} \preceq (p-1) (1 + \|x^*\|^2)^{\frac{1}{2} \frac{p-2}{p-1}} I.
\end{equation}
The claim of the theorem now follows by some straightforward rearrangement using Lemmas \ref{lem:gradbnd} and \ref{lem:Hessbnd}, with constants
\begin{align}\label{eq:dualmu}
    \dualmu&=\min\l(\frac{C_H}{2(p-1)(2+2D_G)},\frac{L_H}{4(p-1)U_G^{(p-2)/(p-1)}}\r),\\
    \label{eq:dualL}
    \dualL&=\min\l(\frac{U_H}{(L_G/2)^{(p-2)/(p-1)}},
    4U_H \l(\frac{C_G}{L_G}\r)^{(p-2)/(p-1)}+2D_H\r).
\end{align}
\end{proof}
  \section{Discussion}
\label{sec:discussion}
In this paper we introduced a non-linear preconditioning scheme for gradient descent on Legendre convex functions $f$ that converges under generalizations of the standard Lipschitz assumption on $\grad f$. There are at least two interpretations of this method. The first is as a generalization of gradient descent in which the update direction is preconditioned by the gradient map $\grad k$ of a designed dual reference, Legendre convex function $k$. The second interpretation is as a Bregman gradient method in the dual space, which minimizes the designed $k$ while the conjugate $f^*$ plays the role of the ``reference function''. The choice of $k$ affects the conditioning of our method, which is made explicit in our analysis through a relative smoothness condition between $k$ and $f^*$. The dual relative conditions admit non-smooth $f$ and $k$, and are provably distinct dual cousins of the relative smoothness conditions introduced by \cite{bauschke2016descent}. $k$ serves as a model of the convex conjugates $f^*$ in a certain problem class. In section \ref{sec:applications}, we show how this method can be applied to exponential penalty functions  (see, \eg{} \cite{cominetti1994asymptotic, cominetti1994stable}) and $p$-norm regression (see \cite{bubeck2018homotopy,adil2019iterative} and references therein) with global convergence rate guarantees.

Algorithm \ref{alg:dualspaceprecon} is related to a number of existing methods, some of which are subject to the analysis we provide. The most notable of these is the method of steepest descent with respect to a given norm $\norm{\cdot}$ (now not necessarily Euclidean). Here we follow the exposition of Boyd and Vandenberghe \cite[sect. 4.9]{boyd2004convex}. The steepest descent iteration is given by
\begin{equation}
\label{eq:steepestdescent}
\begin{aligned}
    x_{i+1} = x_i + \frac{1}{L} \norm{\nabla f(x_i)}_* d, \qquad \text{where } d \in \underset{\norm{x} \leq 1}{\arg \max} \inner{-\nabla f(x_i)}{x},
\end{aligned}
\end{equation}
and $\norm{\dualvarx}_* = \sup_{\norm{x} \leq 1} \inner{x}{\dualvarx}$ is the dual norm of $\norm{\cdot}$. The identity $\partial (\norm{\dualvarx}_*^2/2) = \norm{\dualvarx}_*\arg \max\{\inner{\dualvarx}{x} : \norm{x} \leq 1\}$  for all $\dualvarx \in \R^d$ implies that
for strictly convex and differentiable $\norm{\cdot}_*$, the steepest descent method \eqref{eq:steepestdescent} is a special case of dual preconditioned gradient descent with $k(\dualvarx) = \norm{\dualvarx}_*^2/2$. Our analysis does not apply in the case of other norms or normalized steepest descent \cite{boyd2004convex}. Algorithm \ref{alg:dualspaceprecon} also generalizes the rescaled gradient method of \cite[sect. 2.2]{wilson2019accelerating}.
Thus, our method may be seen as a generalization of the steepest descent method and rescalings of gradient descent. Dual preconditioning is more distantly related to the dual gradient methods \cite{tseng1991applications, beck2014fast}. These methods are designed for problems with non-smooth, but strongly convex structure. They exploit the duality between classical smoothness and strong convexity by applying smooth minimization algorithms to a dual problem.
Similarly, Algorithm \ref{alg:dualspaceprecon} can be seen as a move to the dual space, in which a dual problem $k(\dualvarx) \approx f^*(\dualvarx) - \inner{\dualvarx}{\xmin}$ (dual to $f(x) + \delta_{x = \xmin}(x)$) is minimized by a Bregman gradient method. Thus, dual gradient methods and dual preconditioning are most easily applied when the dual structure is relatively more benign to model than the primal structure, \eg{} when $f$ has super-quadratic growth.

There are a couple natural questions that arise from this work. First, it may be useful to pursue the analogy with dual gradient methods further and to design methods for the general composite model that exploit dual relative smoothness. Second, there is still considerable difficulty in the design of $k$. Thus, it may be productive to investigate whether methods from linear preconditioning  (see \cite{benzi2002preconditioning} for a review), such as incomplete factorizations or sparse approximate inverses, can be generalized to the non-linear setting for the design of $k$. Nonetheless, the dual relative conditions studied in this work provide new avenues for improving the conditioning of optimizers via hard-won domain-specific knowledge.

 \section*{Acknowledgements}
We thank the anonymous referees for their insightful comments that helped us to improve the paper. We thank David Balduzzi for insightful comments, Patrick Rebeschini for suggesting exponential penalty functions and Sushant Sachdeva for suggesting $p$-norm regression. CJM acknowledges the support of the Institute for Advanced Study and the James D. Wolfensohn Fund, the support of a DeepMind Graduate Scholarship, and the support of the Natural Sciences and Engineering Research Council of Canada under reference number PGSD3-460176-2014. YWT's research leading to these results has received funding from the European Research Council under the European Union's Seventh Framework Programme (FP7/2007-2013) ERC grant agreement no. 617071. This material is based upon work supported in part by the U.S. Army Research Laboratory and the U. S. Army Research Office, and by the U.K. Ministry of Defence (MoD) and the U.K. Engineering and Physical Research Council (EPSRC) under grant number EP/R013616/1.

\bibliographystyle{plainnat}
\bibliography{refs}

\end{document}